\documentclass{amsart}

\usepackage[stretch=10,shrink=10]{microtype}
\usepackage[showframe=false, margin = 1.5 in]{geometry}
\usepackage{mathtools}
\usepackage[shortlabels]{enumitem}
\usepackage{stmaryrd}
\usepackage{hyperref}
\hypersetup{
     colorlinks   = true,
     citecolor    = black
}
\usepackage{theoremref}

\makeatletter
\def\thmref@flush{%
   \ifx\thmref@last\empty\else
      \ifthmref@comma, \thmref@finaltrue\fi \thmref@commatrue
      \thmref@last \ifx\thmref@stack\empty\else s\fi \thmref@num 0
      \let\do\thmref@one \thmref@stack
      \ifcase\thmref@num\or\space and\else\thmref@finaltrue, and\fi
      ~\ref{\thmref@head}\let\thmref@stack\empty\fi}
\def\thmref@one#1{\ifnum\thmref@num>0,\fi
   \space\ref{#1}\advance\thmref@num 1\relax}
\makeatother

\newcommand{\E}{\mathbf{E}}

\renewcommand{\P}{\mathbf{P}}

\newcommand{\stationaryconfig}{\sigma}
\newcommand{\sprinkledconfig}{\configg}
\newcommand{\QuenchedP}{\mathrm{P}}
\newcommand{\1}{\mathbf{1}}
\newcommand{\xmid}{{x_{\mathsf{mid}}}}
\newcommand{\In}{\ii{1,n}}

\DeclareMathOperator{\Bin}{Bin}
\DeclareMathOperator{\Geo}{Geo}

\DeclarePairedDelimiter\abs{\lvert}{\rvert}%
\DeclarePairedDelimiter\norm{\lVert}{\rVert}%
\DeclarePairedDelimiter\tvnorm{\lVert}{\rVert_{\mathrm{TV}}}%
\DeclarePairedDelimiter\floor{\lfloor}{\rfloor}%
\DeclarePairedDelimiter\ceil{\lceil}{\rceil}%
\DeclarePairedDelimiter\ii{\llbracket}{\rrbracket}%

\newcommand{\eqd}{\overset{d}=}
\newcommand{\Z}{\mathbb{Z}}
\newcommand{\ZZ}{\mathbb{Z}}
\newcommand{\NN}{\mathbb{N}}
\newcommand{\N}{\mathbb{N}}
\newcommand{\s}{\mathfrak{s}}
\newcommand{\critical}{\rho}

\newcommand{\rt}[2]{#1^{\mathtt{right}}(#2)}
\newcommand{\lt}[2]{#1^{\mathtt{left}}(#2)}

\newcommand{\Left}{\ensuremath{\mathtt{left}}}
\newcommand{\Right}{\ensuremath{\mathtt{right}}}

\newtheorem{thm}{Theorem}[section]
\newtheorem{lemma}[thm]{Lemma}
\newtheorem{prop}[thm]{Proposition}
\newtheorem{cor}[thm]{Corollary}
\newtheorem{conjecture}[thm]{Conjecture}

\newtheorem{problem}[thm]{Problem}
\theoremstyle{remark}

\theoremstyle{definition}

\newcommand{\critFE}{\critical_{\mathtt{FE}}}

\newcommand{\restrictedOp}[2]{#1\if\relax\detokenize{#2}\relax\else^{#2}\fi}
\newcommand{\restrictedSfOp}[2]{\restrictedOp{\mathsf{#1}}{#2}}
\newcommand{\A}[1][]{\restrictedSfOp{A}{#1}}
\newcommand{\Stab}[1][]{\restrictedSfOp{S}{#1}}
\newcommand{\odom}{f}
\newcommand{\config}{\sigma}
\newcommand{\configg}{\tau}
\newcommand{\onepersite}{\mathbf{1}}
\makeatletter
\newcommand{\tmix}[1][\@nil]{%
\def\tmp{#1}%
   \ifx\tmp\@nnil
       \mathbf{t}_{\mathtt{mix}}
    \else
       \mathbf{t}_{\mathtt{mix}}^{(#1)}
    \fi}
\makeatother

\begin{document}

 \title[Stationary activated random walk]{Stationary activated random walk: regularity, heavy tails, and remixing}  

 \author{Tobias Johnson}
 \address{Tobias Johnson, Departments of Mathematics,  CUNY College of Staten Island}
\email{\texttt{tobias.johnson@csi.cuny.edu}}
 \author{Matthew Junge}
 	\address{Matthew Junge, Department of Mathematics, CUNY Baruch College}
	\email{\texttt{matthew.junge@baruch.cuny.edu}}
 \author{Josh Meisel}
 	\address{Josh Meisel, Department of Mathematics, CUNY Graduate Center and Bard College}
	\email{\texttt{jmeisel@bard.edu}}

\begin{abstract}
We study the stationary state of driven-dissipative activated random walk on an interval of length $n$ and prove that subintervals down to size $\log n$ have particle density near the critical value. We then deduce that the avalanche area is heavy-tailed and that the process forgets an initial stationary state after $o(n)$ steps.
We conjecture that the density fluctuations, avalanche area, and remixing time from stationarity are
governed by the same critical exponent. 
\end{abstract}

  \maketitle

\section{Introduction}

\emph{Activated random walk} (ARW) is a model of self-organized criticality \cite{dickman2010activated, BakTangWiesenfeld87}. It can be formulated as an interacting particle system on a graph with active and sleeping  particles. Active particles perform simple random walk at exponential rate 1. When an active particle is alone, it falls asleep at exponential rate $\lambda \in (0,\infty)$. Sleeping particles remain in place but become active if an active particle moves to their site. If all active particles fall asleep, the system stabilizes with every site in the final configuration either empty or containing exactly one sleeping particle. 
A configuration of particles on vertices $V$ is represented as a map
$\sigma\colon V\to\NN\cup\{\s\}$, where $\config(x) = \s$ indicates the presence of a single sleeping particle at $x$ and $\config(x) = k \in \N$ indicates the presence of $k$ active particles.

Most interest in ARW is for the process on the integer lattice $\mathbb Z^d$ \cite{rolla2020activated}. Unlike the abelian sandpile model, even the case $d=1$ is believed to exhibit all of the hallmarks of self-organized criticality, such as power-law statistics and a robust limiting critical density \cite{levine2023universality}. The \emph{fixed-energy} version of ARW takes place with a stationary ergodic initial configuration of active particles on $\mathbb Z^d$. This version is known to have a critical value $\critFE(d) = \critFE(d, \lambda)$. If the mean initial particle density is smaller than $\critFE(d)$, the process eventually fixates in any finite neighborhood, while if it is larger than $\critFE(d)$, any given neighborhood is visited infinitely often by active particles \cite{rolla2019universality}. This critical value is universal in that
it depends only on the density of the initial configuration. Throughout this article we will write $\critFE = \critFE(1)$.

The \emph{driven-dissipative} version of the process takes place on a finite box with sinks at the boundary and particles added one at a time. We focus on the one-dimensional version,
which takes place on the interval $\ii{1,n} := [1,n] \cap \mathbb Z$ with an additional edge from each boundary vertex $1$ and $n$ to a sink. 
The process is a Markov chain $(\sigma_t)_{t=0}^{\infty}$ on state space $\{0,\s\}^{\In}$, i.e., on
configurations of sleeping particles on $\In$. The initial configuration $\sigma_0\in \{0,\s\}^{\In}$
is given, as is the \emph{driving sequence} $\mathbf{u}=(u_t)_{t\geq 1}$ with $u_t\in\In$.
To advance the chain from step~$t$ to $t+1$, an active particle is added to $\sigma_t$ at
site $u_{t+1}$, waking up the sleeping particle if one is present there. The system is then \emph{stabilized} on $\In$, meaning the ARW dynamics are run, with particles removed upon reaching the sink, until the system settles on the stable configuration $\config_{t+1} \in \{0, \s\}^{\In}$. The driving sequence may be random or deterministic but is assumed to be independent of $\sigma_0$ and all particle dynamics. Two standard choices are \emph{uniform driving}, where each $u_t$ is sampled uniformly from $\In$, and \emph{central driving}, where $u_t = \lceil n/2 \rceil$ for all $t \ge 1$.
For convenience, we allow $t$ to be a noninteger by interpreting $\config_t$ as $\config_{\floor{t}}$.
Levine and Liang proved that the chain has a unique stationary distribution $\pi_n=\pi(n,\lambda)$ that does not depend on the
driving sequence \cite[Lemma~2.4]{levine2021exact}. We write $\P_{\pi_n}$ and $\E_{\pi_n}$ for the probability and expectation when sampling $\stationaryconfig$ from $\pi_n$.  
For any configuration $\sigma$, let $\abs{\sigma}$ denote its total number
of sleeping and active particles, and for any subinterval $I\subseteq\ii{1,n}$, let $\sigma(I)$ denote the restriction of $\sigma$  to $I$.

Driven-dissipative ARW is proposed as a model of self-organized criticality because
it naturally tends to its stationary distribution, which is believed to be critical in
various senses. In particular, the \emph{density conjecture} states that as the box size
tends to infinity, the stationary density tends to $\critFE$
\cite{dickman1998self,vespignani2000absorbing,DickmanRollaSidoravicius10}.
In our notation, the conjecture in dimension~$1$ is the statement that
$\lim_{n \to \infty} \E_{\pi_n}|\stationaryconfig|/n$ exists and equals $\critFE$.
Physicists have carried out extensive experimental studies of activated random walk 
and other related sandpile models; see \cite[Section~1.3]{rolla2020activated} and \cite{levine2023universality}.
Compared to related processes such as the abelian sandpile and stochastic sandpile models,
activated random walk is thought to be the most universal \cite{levine2023universality}.

The density conjecture has now been proven in dimension one \cite{hoffman2024density,forien2025newproof}.
In this paper, we further explore the properties of the stationary
distribution $\pi_n$ beyond its limiting density.
Our main results for a configuration $\sigma\sim\pi_n$ are:
\begin{description}
  \item[Regularity] The configuration $\sigma$ has density close to $\critFE$ not just on the 
    entire interval, but on all subintervals down to a logarithmic scale.
  \item[Heavy tails] When an active particle is added to $\sigma$, it provokes an avalanche whose
    area of activity is heavy-tailed.
  \item[Remixing] The addition of $o(n)$ active particles to $\sigma$ brings the system to a 
    fresh stationary state, independent of $\sigma$.
\end{description}
In different ways, these results indicate that $\pi_n$ is a robust and universal critical state.
And, as we discuss in Section~\ref{sec:exponents}, we believe that the strongest forms
of these results are related to a critical exponent for activated random walk.
We discuss the results in more detail now.

\subsection{Regularity}

The density conjecture in dimension one follows from the large deviation estimates \cite[Theorem 8.4 and Proposition 8.5]{hoffman2024density}, which together imply that for any $\epsilon > 0$,
\begin{align}
\P_{\pi_n}\Bigl(\abs[\big]{\,\critFE n - \abs{\stationaryconfig}\,} \geq \epsilon n\Bigr) \leq Ce^{-c n} \label{eq:regularity}
\end{align} 
where the constants $C,c>0$ are independent of $n$ but may depend on $\epsilon$ and $\lambda$. 
In words, the density of sleeping particles in the entire interval under the stationary distribution is exponentially unlikely to deviate from $\critFE$. 

The activated random walk critical state is believed to be highly uniform, in contrast with
the intricate but nonuniversal critical state of the abelian sandpile model 
(see \cite[Section~10]{levine2023universality}).
We confirm this by proving that a sample from $\pi_n$ has density close
to $\critFE$ on any subinterval down to logarithmic scale with high probability.

\begin{thm} \thlabel{thm:regularity}
For any interval $I\subseteq\In$ with $m=\abs{I}$ and any $\epsilon>0$,
\begin{align}
\P_{\pi_n}\Bigl(\abs[\big]{\,\critFE m - \abs{\stationaryconfig({I})}\,} \geq \epsilon m\Bigr) \leq Cn^6 e^{-c m} ,\label{eq:log-regularity}
\end{align}
for some constants $c,C>0$ depending only on $\epsilon$ and $\lambda$.
Thus there exists $B> 0$ depending on $\epsilon$ and $\lambda$ such that for any sequence of intervals $J_n \subseteq \In$ with $m_n=\abs{J_n}$ satisfying $\abs{J_n}  \geq B\log n$, it holds that $\P_{\pi_n}\bigl(\abs[\big]{\critFE m_n - \abs{\stationaryconfig({J_n})}} \geq \epsilon m_n\bigr)\to 0$ as $n\to\infty$.
\end{thm}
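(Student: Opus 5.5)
We would reduce the subinterval statement to the whole-interval large deviation bound \eqref{eq:regularity}, applied on an interval of length comparable to $m$. The key structural input is the abelian property together with the fact, due to Levine and Liang, that $\pi_n$ does not depend on the driving sequence. Write $I=\ii{a,b}$. We choose a driving sequence that first drives only inside $I$ (or inside a slightly enlarged interval $I'\supseteq I$) and then study what the restriction $\sigma(I)$ looks like.

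\emph{Step 1 (driving inside $I$ makes $\sigma(I)$ look like $\pi_m$, up to outside influence).} We use the standard comparison from the sink-at-the-boundary construction. Stabilizing on $\In$ can be done by first stabilizing on $I$, where particles exiting $I$ are frozen at $a-1$ and $b+1$, and then stabilizing the rest. The first stage produces exactly a driven-dissipative chain on $I$. Its configuration is therefore close to $\pi_m$-distributed once enough particles have been added, and the large deviation estimate \eqref{eq:regularity} with $n$ replaced by $m$ applies. The second stage may send particles back into $I$, which can only add mass there and wake sleepers. To control this, we use monotonicity in both directions.
\begin{itemize}
\item[(i)] Lower bound. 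We use the \emph{upper} large deviation direction of \eqref{eq:regularity}: an interval in a stable configuration that is reached from a supercritical-looking state cannot keep density below $\critFE-\epsilon$, since otherwise adding particles there would stabilize locally. We phrase this as follows. Condition on $\sigma$, add $\lceil \epsilon m\rceil$ particles inside $I$, and use the fact that with probability $1-Ce^{-cm}$ an interval of density $\le \critFE-\epsilon$ absorbs them without emitting to the sink. This would contradict stationarity of $|\sigma|$ on average, or more precisely the exponential concentration \eqref{eq:regularity} for the whole interval.
\item[(ii)] Upper bound. We argue symmetrically: excess density $\ge\critFE+\epsilon$ on $I$ means a single added particle triggers, with high probability, an avalanche that ejects order-$m$ particles from $I$. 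Under stationarity this must be compensated, contradicting the fact that the total mass has only exponentially small probability of deviating.
\end{itemize}

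\emph{Step 2 (making it rigorous via \cite{hoffman2024density}).} Rather than the heuristic above, we intend to invoke the finer estimates behind \eqref{eq:regularity}, namely \cite[Theorem 8.4 and Proposition 8.5]{hoffman2024density}. These are stated for arbitrary initial configurations on an interval: a configuration of density $\ge\critFE+\epsilon$ loses at least $\delta m$ particles to the sink with probability $\ge 1-Ce^{-cm}$, and a configuration of density $\le\critFE-\epsilon$ plus $\epsilon m$ added particles fixates without reaching the boundary. We apply these to the subinterval $I$ with its two boundary neighbours treated as sinks. This is legitimate by the abelian property and the stabilize-inside-first decomposition.

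\emph{Step 3 (union bound and the $n^6$).} The polynomial prefactor comes from a union bound over the possible positions of the sub-subintervals where the bad density is witnessed. It also absorbs time: we run the stationary chain for $\mathrm{poly}(n)$ steps and use the fact that each step's event has probability $\le Ce^{-cm}$. Stationarity then means that the probability at time $0$ is at most the time-averaged probability. Once \eqref{eq:log-regularity} holds, the corollary follows by taking $B>6/c$, so that $Cn^6e^{-cB\log n}\to0$.

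The main obstacle is Step 1(ii)/Step 2: converting "dense on $I$" into a contradiction with stationarity. Particles ejected from $I$ may be reabsorbed elsewhere rather than reaching the true sink, so we need a conservation argument. We would try to compare the expected number of particles lost to the sink per added particle, which equals $1$ under $\pi_n$, with the loss forced by a dense block, using a time-averaged counting over $\mathrm{poly}(n)$ steps.
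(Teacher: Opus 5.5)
There is a genuine gap. Your Step~1 starts from the right picture: the configuration on $I=\ii{a,b}$ is produced by stabilizing on $I$ and re-stabilizing each time the outside sends a particle back in. But you do not extract the key consequence, and the substitute arguments in Steps~1(i)--(ii) and~2 do not work.

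\emph{The missing idea.} Stationarity on $I$ is \emph{exact} after every deterministic number of re-entries.
\begin{enumerate}
\item Sample $\sigma\sim\pi_n$ as $\Stab[\ii{1,n}]\1_{\ii{1,n}}$ with stabilizing odometer $f$, and set $L=\rt{f}{a-1}$, $R=\lt{f}{b+1}$.
\item A restriction (spatial Markov) property of odometers shows that $\sigma$ agrees on $I$ with $\tau(L,R)$, where $\tau(\ell,r)=\Stab[\ii{a,b}](\1_{\ii{a,b}}+\ell\delta_a+r\delta_b)$.
\item For fixed $\ell,r$ one has $\tau(\ell,r)\sim\pi_m$ exactly. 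To see this, first topple the extra particles into the sink using the strong Markov property of the stacks, then apply \cite{levine2021exact} on $\ii{a,b}$.
\item Hence \eqref{eq:regularity} at scale $m$ gives both the upper and the lower density bound for each fixed $(\ell,r)$.
\end{enumerate}
The only remaining difficulty is that $L$ and $R$ are random and depend on the instructions inside $I$. This is handled by a union bound over $0\le \ell,r\le n^3$, which is where the factor $n^6$ comes from. It is combined with $\P(\max(L,R)>n^3)\le Ce^{-cn}$, which follows from least action: $L$ is at most the number of jumps from $a-1$ to $a$ made by $n$ independent walks killed at the boundary, and each walk's local time at $a-1$ is dominated by $\Geo(1/n)$.

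\emph{Why your steps fail.} Steps~1(i)--(ii) try to contradict either the whole-interval bound \eqref{eq:regularity} or the identity that one particle is lost per added particle on average. Neither can detect a surplus or deficit of order $\epsilon m$ on an interval with $m$ as small as $B\log n$. The bound \eqref{eq:regularity} only controls deviations of order $n$, and a first-moment identity cannot produce a bound of the form $Ce^{-cm}$.

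Step~2 attributes to \cite{hoffman2024density} an absorption statement: that a configuration of density $\le\critFE-\epsilon$ plus $\epsilon m$ particles stabilizes without reaching the boundary. That statement is not there, and it is false in general, for instance when the particles are added next to an endpoint of $I$. Moreover, even a valid estimate for arbitrary initial configurations cannot be applied to $\sigma|_I$ plus the returning particles ``by the abelian property.'' The number of returning particles is not independent of the stacks in $I$, and that dependence is exactly what the union bound over $(\ell,r)$ circumvents.

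In Step~3, stationarity makes the time-$0$ probability \emph{equal} to every later one, so averaging over $\mathrm{poly}(n)$ time steps gives nothing. Also, the per-step bound $Ce^{-cm}$ you would average is the very statement being proved. Your final deduction of the $B\log n$ corollary from \eqref{eq:log-regularity} is fine.
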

We will typically apply this result to show that density regularity holds simultaneously for all intervals of $\ii{1,n}$ of a given minimum length, by a union bound.

Although this result is not explicitly conjectured in Levine and Silvestri's survey 
\cite{levine2023universality}, it is in the same spirit as their Conjecture 2, ``Uniformity of the aggregate'' for the point-source model, and also their Conjecture 20 on ``Incompressibility.'' Our result does not address the stronger conjecture that $\pi_n$
is \emph{hyperuniform} \cite[Conjecture~26]{levine2023universality}, i.e., that the number
of particles in a sample of $\pi_n$ has sublinear variance.

Our proof proceeds by a simple but useful technique to transfer known facts about $\pi_n$ on
the entire interval $\ii{1,n}$ down to subintervals. Using the same technique,
we also prove an upper bound on the density of $\sigma$ on a subinterval 
that holds when $\sigma$ is the stabilization
of an arbitrary configuration rather than a sample from the stationary distribution; see 
\thref{prop:upper.regularity}.

\subsection{Heavy tails}
Power-law observables are typical of systems at criticality because of the lack of a characteristic scale.
Bak, Tang, and Wiesenfeld called power laws the  ``fingerprint'' of self-organized 
criticality \cite{BakTangWiesenfeld88}. 
Thus we expect power-law statistics in the stationary distribution of driven-dissipative ARW,
if it represents a self-driven critical state.
The following result confirms this for the \emph{avalanche area} $X_n'(\sigma)$, the number of sites that see activity
when centrally adding an active particle to a configuration of sleeping particles $\sigma\sim\pi_n$
and stabilizing.
In fact, we prove a heavy tail for the following statistic we call the \emph{avalanche extent},
which serves as a lower bound for the avalanche area:
\begin{align*}
  X_n(\sigma) = \max\Bigl\{\abs[\big]{v-\ceil{n/2}}\colon \text{activity occurs at site~$v$ when stabilizing $\sigma+\delta_{\ceil{n/2}}$
    on $\ii{1,n}$}\Bigr\}
\end{align*}
when $\sigma\sim\pi_n$. In words, we start with a configuration $\sigma$ of sleeping particles sampled from
the stationary distribution of the driven-dissipative chain. We add an active particle at the midpoint
of $\ii{1,n}$ and then define the avalanche extent $X_n(\sigma)$ as the maximum distance from the center 
visited in the course of stabilizing the configuration. We show that $\P_{\pi_n}(X_n(\sigma)\geq m)$
decays slower than $1/m$, so long as $m=\omega(\log n)$.

\begin{thm} \thlabel{thm:ht}
    For any sequence $(m_n)_{n \geq 1}$ satisfying  $m_n\leq n/2$ and $m_n/\log n\to\infty$,
    \begin{align}\label{eq:av-star}
      m_n\P_{\pi_n}\bigl( X_n(\sigma) \geq m_n\bigr)\to\infty
    \end{align}
   as $n\to\infty$.
\end{thm}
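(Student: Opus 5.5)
We need to show that $m\P_{\pi_n}(X_n\ge m)\to\infty$ whenever $m=\omega(\log n)$. The plan is a mass-balance argument on the window $J=\ii{\ceil{n/2}-m+1,\ \ceil{n/2}+m-1}$ of size about $2m$, built around an avalanche that does not reach distance $m$.

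\textbf{Setup.} If $X_n(\sigma)<m$, the avalanche started by adding a particle at the center stays inside $J$. No particle leaves $J$, so the particle count in $J$ goes up by exactly one:
\[
\abs{\sigma_1(J)}=\abs{\sigma(J)}+1 .
\]
Outside $J$, the configuration is unchanged.

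Now use central driving, which is allowed since $\pi_n$ does not depend on the driving sequence. Run the chain $\sigma_0=\sigma\sim\pi_n$, $\sigma_1,\dots,\sigma_k$. Each $\sigma_t$ has law $\pi_n$, and step $t$ is distributed as $X_n(\sigma_{t-1})$ with $\sigma_{t-1}\sim\pi_n$. Let $p=\P_{\pi_n}(X_n\ge m)$.

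\textbf{Counting steps.}
\begin{itemize}
\item Steps with extent $<m$ increase $\abs{\sigma_t(J)}$ by one.
\item Steps with extent $\ge m$ can decrease $\abs{\sigma_t(J)}$ by at most $\abs J\le 2m$, since the count is always nonnegative.
\end{itemize}
Summing over $k$ steps, the number of small steps minus $2m$ times the number of big steps is at most the total change in the count, which is at most $2m$. Taking expectations gives
\[
k(1-p)-2m\,k\,p\le 2m .
\]
With $k=\lceil 4m\rceil$ this only yields $p\gtrsim 1/(2m)$, which is not enough.

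\textbf{Sharpening with regularity.} To get $mp\to\infty$, the bound on how much a big step can remove has to be sharp. Fix $\epsilon>0$.

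\begin{itemize}
\item By \thref{thm:regularity} with a union bound over the $k$ times, with probability at least $1-k\,Cn^6e^{-cm}$ we have $\abs{\sigma_t(J)}\in[(\critFE-\epsilon)\abs J,(\critFE+\epsilon)\abs J]$ for all $t\le k$. This failure probability is $o(1)$ once $m\ge B\log n$ and $k$ is polynomial in $n$.
\item On this good event, the total increase from small steps is at most $2\epsilon\abs J$ plus the total decrease from big steps.
\end{itemize}

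Bounding each big step's decrease by $\abs J$ still gives only $p\gtrsim 1/\abs J$. The improvement is to choose a time horizon that is large but carries few big steps. Take $k=\epsilon' m$ with $\epsilon'$ small:

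\begin{itemize}
\item Suppose $p\le K/m$. Then the expected number of big steps in $k$ steps is at most $K\epsilon'$, so with probability at least $1-K\epsilon'$ there are no big steps at all.
\item On that event every step is small, so $\abs{\sigma_k(J)}=\abs{\sigma(J)}+\epsilon' m$.
\item If $\epsilon'\gg\epsilon$, that increase is $\epsilon' m>2\epsilon\cdot 2m\ge 2\epsilon\abs J$. This contradicts regularity at times $0$ and $k$, which holds with probability $1-o(1)$.
\item Choosing $\epsilon'=1/(2K)$ and $\epsilon=\epsilon'/8$, the two events together have probability at least $1/2-o(1)$ but are disjoint, a contradiction for large $n$.
\end{itemize}

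Since $K$ was arbitrary, $mp\to\infty$. This argument uses only regularity at the two times $0$ and $k$ and a union bound on the big-step count; it requires $m\ge B(\epsilon)\log n$, which holds because $m/\log n\to\infty$.

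\textbf{Main obstacle.} The delicate points are checking the deterministic claim that an avalanche of extent $<m$ conserves mass in $J$, including the handling of the center and the boundary sites $\pm m$, and making sure the window $J$ stays inside $\In$ when $m\le n/2$. The second can be fixed by shrinking $J$ to size $m$, which changes only constants.
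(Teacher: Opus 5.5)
Your argument is correct and is essentially the paper's proof. The paper likewise runs the centrally driven chain from stationarity for $\lceil 2\epsilon m_n\rceil$ steps, bounds the probability that some avalanche has extent at least $m_n$ by a union bound (your expected count of big steps), and applies \thref{thm:regularity} at the initial and final times on a central window of size $2m_n$ to show this probability tends to $1$, giving $\liminf_n m_n\P_{\pi_n}(X_n(\sigma)\ge m_n)\ge 1/(2\epsilon)$; your $K$ versus $\epsilon'=1/(2K)$ contrapositive is the same argument phrased indirectly, your preliminary ``counting steps'' attempt is unnecessary, and your window $\ii{\ceil{n/2}-m+1,\ceil{n/2}+m-1}$ already lies in $\In$ when $m\le n/2$, so no shrinking is needed.
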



Two immediate corollaries of \thref{thm:ht} are that $\E_{\pi_n}[X_n(\sigma)] \to \infty$ as $n \to \infty$, and that the probability of an avalanche reaching one of the endpoints of $\In$ is $\omega(1/n)$.

For the abelian sandpile, the avalanche size has also been shown to be heavy-tailed, and in dimension
five and above the diameter of the avalanche has been shown to have critical exponent two
\cite{hanson2017inequalities,hutchcroft2020universality}. For ARW, the only previous result
on avalanches is a very simple argument of ours that
$\P_{\pi_n}( X_n(\sigma) \geq m)\geq 1/m$ for all $1\leq m\leq n/2$ \cite[Theorem~3]{hoffman2026exhibits}.
Our result here improves $1/m$ to $\omega(1/m)$ under the assumption $m=\omega(\log n)$. We expect that this tail is governed by a critical
exponent that can be related to other ARW critical exponents; see Section~\ref{sec:exponents}.

\subsection{Remixing}
Driven-dissipative ARW exhibits cutoff at $\critFE n$ under both uniform and central driving \cite{hoffman2025cutoff}. More specifically, denoting the total variation norm by $\tvnorm{\, \cdot \,}$, the mixing time
\begin{align*}
  \tmix[n](\epsilon) := \min\bigl\{t\colon \max_{\sigma_0}\tvnorm{\QuenchedP^{\sigma_0}(\sigma_t\in\cdot\:)-\pi_n}\leq\epsilon\bigr\}
\end{align*}
satisfies $\tmix[n](\epsilon)=\big(\critFE+o(1)\big)n$ for all $\epsilon \in (0,1)$. Here, the maximum is taken over all initial states $\sigma_0 \in \{0, \s\}^{\In}$, and
$\QuenchedP^{\sigma}$ denotes the law of the driven-dissipative chain $(\sigma_t)_{t\geq 0}$ starting
from $\sigma_0=\sigma$.

The gist of this result is that driven-dissipative ARW mixes as quickly as possible,
since from the empty configuration we must add at least $\critFE n$ particles to
bring the system to density $\critFE$, 
which by the density conjecture is the density of the chain at stationarity.
This can be seen as a universality result: the system's initial state is forgotten quickly.
In contrast, the abelian sandpile mixes slowly \cite{hough2019sandpiles,hough2021cutoff}, 
which is at the root of its nonuniversality \cite{levine2015threshold}.

In this paper, we investigate the robustness of this mixing result. Cutoff at $\critFE n$ represents
the worst-case behavior, starting from an empty configuration. Now we ask for the mixing time starting
from a sample from the stationary distribution, at which time we are already close to density $\critFE$.
To state this result, we first specify that when $\sigma$ is random, the measure $\QuenchedP^{\sigma}$
denotes the \emph{quenched law} of $(\sigma_t)_{t\geq 0}$, i.e., the random measure, measurable
with respect to $\sigma$, that gives the law of the chain conditional on $\sigma$.
The following
result shows that the mixing time starting from configuration $\sigma$ is $o(n)$
for all $\sigma$ outside of a set of configurations with small $\pi_n$-measure.
Recall that under $\E_{\pi_n}$, the random variable $\config$ is sampled from $\pi_n$.
\begin{thm}\thlabel{thm:remix}
For any $\epsilon>0$,
\begin{align}
\E_{\pi_n} \left\| \QuenchedP^{\stationaryconfig}(\sigma_{\epsilon n} \in \cdot \:) - \pi_n \right\|_{\mathrm{TV}} \leq Ce^{-c n} \label{eq:remix}
\end{align}
with uniform or central driving and $C,c>0$ constants that depend on $\lambda$ and $\epsilon$ but not $n$.
\end{thm}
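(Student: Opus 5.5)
The plan is to reduce \eqref{eq:remix} to a statement about configurations that are already \emph{uniformly near-critical}: every long subinterval has density at least $\critFE-\delta$. Such configurations should mix after $O(\delta n)$ additional steps, with failure probability exponentially small in $n$. The reduction rests on \thref{thm:regularity}. Fix $\delta>0$ small in terms of $\epsilon$ and let $G_\delta$ be the set of $\sigma$ with $\abs{\sigma(I)}\ge(\critFE-\delta)\abs{I}$ for every interval $I$ with $\abs{I}\ge \delta n$. By \eqref{eq:log-regularity} and a union bound over the $O(n^2)$ intervals, $\pi_n(G_\delta^c)\le Cn^8e^{-c\delta n}$, which is $Ce^{-c'n}$. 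Since total variation is at most $1$, it then suffices to bound $\norm{\QuenchedP^{\sigma}(\sigma_{\epsilon n}\in\cdot\,)-\pi_n}_{\mathrm{TV}}$ by $Ce^{-cn}$ uniformly over $\sigma\in G_\delta$.

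For $\sigma\in G_\delta$ I would use a coupling built from the abelian property and the sitewise (Diaconis--Fulton) representation. Run two chains on the same instructions and the same driving sequence: one from $\sigma$, and one from $\sigma'\sim\pi_n$. Conditioned on $\sigma'\in G_\delta$, which costs only $Ce^{-cn}$, both chains start from uniformly near-critical states. The key observation is the standard one from exact sampling \cite{levine2021exact}. Once every site of $\In$ has been visited by an active particle during the stabilization of the first $t$ added particles, every sleeping particle of the initial configuration has been woken. At that point the memory of which sites were initially occupied enters only through the counts per site and the instructions already used. The coupling target is therefore the event $A$ that, among the first $\epsilon n$ added particles, there is an avalanche sweeping the whole interval, i.e. an avalanche whose odometer is positive at every site, including both endpoints. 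After such a sweep, I would show using the monotonicity of the odometer in the initial configuration that the two chains are sandwiched. Both dominate the stabilization of a configuration depending only on the instructions. Both are dominated by stabilizing with one extra ``buffer'' of particles near the boundary. Both bounds coalesce because mass leaks to the sinks.

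To show $\P(A^c)\le Ce^{-cn}$, I would adapt the supercriticality argument behind the cutoff theorem \cite{hoffman2025cutoff} and the large deviation bounds of \cite{hoffman2024density}. Adding $\epsilon n$ particles to a configuration in $G_\delta$ with $\delta\ll\epsilon$ pushes the total mass to at least $(\critFE+\epsilon/2)n$ on large blocks. By the exponential large deviation estimates for stabilization at supercritical density, the interval cannot absorb that much mass without emitting a macroscopic amount to the sinks, and an avalanche that reaches both sinks covers every site. For central driving this is direct, since all particles enter at $\ceil{n/2}$. For uniform driving I would first check that the added particles are spread with density about $\epsilon$ on every block of size $\delta n$, except with probability $e^{-cn}$ by a Chernoff bound.

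The main obstacle is the coalescence step. A sweep wakes every particle, but it does not literally erase the difference in particle counts between the two chains, so ``everything woken'' must be upgraded to ``identical state'' or to a TV bound. My first attempt would be to make the two chains' total masses agree up to $o(n)$ and use the sandwich by monotone stabilizations. Failing that, I would appeal directly to the quantitative mixing bound in \cite{hoffman2025cutoff}, stated as ``time $\approx(\critFE n-\abs{\sigma_0})^+ + o(n)$ from well-spread $\sigma_0$'', and check that its error probability is exponential in $n$.
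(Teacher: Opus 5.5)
There is a genuine gap in the step you treat as routine: showing that the stabilization of $\sprinkledconfig=\sigma+\sum_{t\le \floor{\epsilon n}}\delta_{u_t}$ visits every site. Passing to a worst case over $\sigma\in G_\delta$ discards stationarity, and stationarity is what makes particle loss easy. Since $\sigma$ and $\Stab\sprinkledconfig=\sigma_{\epsilon n}$ are both $\pi_n$-distributed, \eqref{eq:regularity} forces about $\epsilon n$ particles to reach the sink w.o.p.\ (\thref{lem:particle.loss}). Your substitute does not work. \cite[Theorem~8.4]{hoffman2024density} bounds the density after stabilizing a configuration of \emph{active} particles, but $\sprinkledconfig$ contains the sleeping particles of $\sigma$, and a stable configuration can have any density up to $1$. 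Monotonicity also runs the wrong way: waking particles can only increase the odometer, and hence the number of particles emitted. So emission from $\A[\In]\sprinkledconfig$ gives no lower bound on emission from $\sprinkledconfig$. In fact, for uniform driving your reduced claim (remixing by time $\epsilon n$ uniformly over $G_\delta$) would imply $o(n)$ mixing from every $\critFE$-regular sequence, which the paper leaves open as \thref{q:remix}. For central driving a worst-case version is plausible, but the seed would have to come from the added particles spreading over a macroscopic interval, not from large deviations.

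Even granting macroscopic emission, particles reaching a sink only show that some boundary interval is visited, not that both sinks are reached. So your remark that an avalanche reaching both sinks covers every site has nothing to act on. The missing ingredient is nucleation. The weighted-mass criterion \eqref{eq:com-implies-nucleation} from \cite[Proposition~4.1]{hoffman2025cutoff} has a hypothesis that follows from \thref{thm:regularity} via \thref{lem:weighted.mass.regularity}, with the $\floor{\epsilon n}$ added particles supplying the excess over $\critFE$. Combined with the preemptive abelian property, it grows the visited set one site at a time, either from $\ii{1,\xmid}$ (central driving) or from a boundary block of length $\epsilon n/10$ (uniform driving). In the uniform case you also need that this block holds fewer than $\epsilon n/4$ particles, so that $\epsilon n/2$ exiting particles force it to be fully visited.

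Finally, the step you flag as the main obstacle needs no coupling: \cite[Theorem~1.2]{hoffman2025cutoff} gives \eqref{eq:visit-bound} conditionally on $\sigma$. Everything then reduces to showing that the combined stabilization of $\sprinkledconfig$ (not a single avalanche) visits $\In$ w.o.p.\ with $\sigma\sim\pi_n$ averaged, and that is exactly where stationarity can be kept. Your sandwich heuristic is doubtful in any case, since ARW monotonicity controls odometers rather than final configurations.
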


Besides its relevance to universality, this mixing result is practically significant in describing
how long it takes to obtain multiple independent samples from $\pi_n$. It is also
reminiscent of the results obtained by Levine and Silvestri on IDLA on the cylinder 
\cite{levine2019long, silvestri2020internal}. More broadly, our result is in the spirit
of Markov chain results like \cite[Theorem~2]{lubetzky2017universality} that investigate 
non-worst-case mixing times.

\thref{thm:remix} could be extended in two directions. First, one could consider more general driving
than central and uniform. We mention, however, that in \thref{thm:ce} we prove that the time to remix from 
stationarity is 
$\Omega(n)$ under driving at the left boundary.
Second, one could consider mixing starting from a
general class of configurations with near-critical density rather than from stationarity.
See Section~\ref{sec:mixing.extensions} for further discussion.

\subsection{Critical exponents}\label{sec:exponents}
Each of our results has a (conjectural) critical exponent associated with it.
Regularity is governed by the \emph{fluctuation exponent} $\alpha$, the infimum
over all $\alpha'$ such that $\abs[\big]{\abs{\sigma} - \critFE n} \leq n^{\alpha'}$ with high probability
for $\sigma\sim\pi_n$. This exponent measures the combined
random fluctuation of $\abs{\sigma}$ from $\E\abs{\sigma}$ and the nonrandom
fluctuation of $\E\abs{\sigma}$ from $\critFE n$. The avalanche area $X_n'(\sigma)$ is expected to satisfy
$\P_{\pi_n}(X_n'(\sigma)>k)\asymp k^{-\beta}$ for some exponent $\beta$.
And the mixing time from stationarity with bulk driving should be on the order of $n^{\gamma}$ for some exponent
$\gamma$. Speaking very loosely---in particular, assuming that these exponents exist
in some precise way---our results indicate that they are all bounded by $1$.

\begin{conjecture}
  The critical exponents $\alpha$, $\beta$, and $\gamma$ can be defined rigorously
  and satisfy $\alpha=\beta=\gamma$.
\end{conjecture}

To give a heuristic argument for the conjecture, we start with
\cite[Theorem~1.2]{hoffman2025cutoff} which roughly speaking states that if
all sites on $\ii{1,n}$ are visited in the course of a stabilization yielding
configuration $\sigma$, then $\sigma$ is nearly distributed as $\pi_n$.
Now, suppose $\sigma\sim\pi_n$. The number of particles in $\sigma$
is at least $\critFE n - O(n^\alpha)$, and so the addition of $Cn^\alpha$
new particles is enough to make it supercritical, causing activity everywhere
and causing it to mix, suggesting that $\alpha=\gamma$. To relate $\beta$ to $\gamma$,
if $\P_{\pi_n}(X_n'(\sigma)\geq n)\approx n^{-\beta}$, then in $Cn^{\beta}$ steps of the chain 
it is likely that an avalanche occurs visiting everywhere, causing
the chain to mix.

The avalanche area exponent $\beta$ has been estimated as $0.259\pm 0.011$ on various
one-dimensional lattices for the stochastic sandpile model \cite{huynh2011abelian}.
While no published estimates are available for activated random walk, it is expected that the two
models fall in the same universality class \cite{DickmanRollaSidoravicius10}.
Rigorously determining these critical exponents is only a distant goal, 
but even a bound strictly below $1$ for any of these exponents would be a strong
result.

\subsection{Proof sketches}\label{sec:sketches}
\subsubsection*{Theorem~\ref{thm:regularity}}
We first note that we can obtain $\sigma\sim\pi_n$
as the stabilization of configuration $\onepersite_{\ii{1,n}}$ consisting of a single
active particle at each site of $\ii{1,n}$ \cite[Theorem~2.1]{levine2021exact}.
Our goal is to show that $\sigma$ has density close to $\critFE$ on $\ii{a,b}$.
To prove this, we imagine stabilizing $\ii{1,n}$ by first stabilizing $\ii{a,b}$.
Then, we continue stabilizing the configuration until a particle is sent back into $\ii{a,b}$, 
at which point we stabilize $\ii{a,b}$ again.
Let us call each of these stabilizations of $\ii{a,b}$ a \emph{left stabilization} or a
\emph{right stabilization} depending on whether it was provoked by a particle sent to $a$ or to $b$.
At some point---say after $L$ left stabilizations
and $R$ right stabilizations of $\ii{a,b}$---the entire system is stabilized.

After the initial stabilization of $\ii{a,b}$
and any deterministic number $\ell$ left stabilizations and $r$ right stabilizations,
the configuration on $\ii{a,b}$ is distributed as $\pi_{b-a+1}$, the stationary distribution
of the driven-dissipative chain on $\ii{a,b}$. Indeed, after the initial stabilization, the configuration
is stationary for the driven-dissipative chain on $\ii{a,b}$ by \cite[Theorem~2.1]{levine2021exact},
and further left and right stabilizations are simply steps in this chain, with driving either at $a$
or $b$ (which is irrelevant). Thus the density on $\ii{a,b}$ after $\ell$
left stabilizations and $r$ right stabilizations is close to $\critFE$
with high probability by \eqref{eq:regularity}, applied with $n$ replaced by $b-a+1$.

However, we are interested in the configuration on $\ii{a,b}$ after \emph{random} numbers $L$ and $R$ of 
left and right stabilizations, at which time the configuration on $\ii{a,b}$ is no longer distributed as
$\pi_{b-a+1}$. The trick to our proof is to simply take a union bound over all possibilities $L=\ell$
and $R=r$ with $\ell,r\leq n^3$, and then to note that it is very unlikely that $L>n^3$ or $R>n^3$.

We mention that we shared this argument with the authors of \cite{hoffman2026local}, who used
it with our permission in Section~4 of that article in advance of this present work.

\subsubsection*{Theorem~\ref{thm:ht}}
Consider the driven-dissipative chain on $\ii{1,n}$ with central driving.
To prove that the avalanche extent $X_n(\sigma)$ satisfies
$\P_{\pi_n}\bigl(X_n(\sigma)\geq m\bigr)=\omega(1/m)$, we note that if $\sigma_0\sim\pi_n$,
then $\sigma_s$ is given by executing $s$ avalanches in succession, all at stationarity.
If the extents of these avalanches are all bounded by $m$, then all $s$ particles added to $\config_0$ remain within distance $m$ of the center in $\config_s$. 
If $s= \epsilon m$, then $\config_s$ has density above $\critFE$ on an interval of length~$2m$, violating \thref{thm:regularity}. Thus we can deduce that one of the $s$ avalanches has extent greater than $m$ sites w.h.p.,
and hence a single avalanche must have extent greater than $m$  with probability at least $1/s=1/\epsilon m$.
\subsubsection*{Theorem~\ref{thm:remix}}

Consider the driven-dissipative ARW chain $(\sigma_t)_{t \geq 0}$ at stationarity with central
or uniform driving.
By the abelian property, we may reach $\config_{\epsilon n}$ by adding all $\floor{\epsilon n}$ driving
particles to $\sigma_0$ to obtain a configuration $\sprinkledconfig$,
which we then stabilize to get $\config_{\epsilon n}$.
To prove that $\config_{\epsilon n}$ has mixed given $\config_0$, it is sufficient
by \cite[Theorem 1.2]{hoffman2025cutoff} to show that every site of $\ii{1,n}$
is visited in the course of stabilizing $\tau$ with high probability.

For the central driving case,
since $\config_0$ and $\config_{\epsilon n}$ are both stationary, their densities are both close
to $\critFE$ by \eqref{eq:regularity}. Thus particles must be expelled when stabilizing $\tau$,
or else the density of $\config_{\epsilon n}$ would be greater than the density of $\config_0$
by $\epsilon$. Suppose without loss of generality that a particle exits at the left of the interval.
Then the entire left half of $\In$ is visited. Now we invoke \thref{lem:nucleate},
where we show that if we visit an entire interval $\ii{1, x}$ containing the center, 
then we will also visit $\ii{1,x+1}$ with high probability. Thus, the stabilization \emph{nucleates}:
by visiting $\ii{1,n/2}$, we ensure that we visit $\ii{1,n/2+1}$, thus ensuring
that we visit $\ii{1,n/2+2}$, and so on until the entire interval is visited.

For uniform driving, we use a similar nucleation argument, but it is more difficult
to show that a macroscopically large interval is visited to start the nucleation, since
a single particle exiting $\ii{1,n}$ no longer implies that an entire half-interval is visited.
But we know that in fact $\Omega(n)$ particles must exit the interval, which implies
that an interval of length $\Omega(n)$ on one of the boundaries is visited. We then
use this interval in place of the half interval to start the nucleation.

\section{Setup and notation} \label{sec:setup}
We use the standard sitewise construction of ARW as in \cite{rolla2020activated},
in which particles move by executing stacks of instructions at each site telling them
to jump or try to sleep.
We formally view a particle configuration on
some $V \subseteq \Z$ as a map $\config \colon \Z \to \N \cup \{\s\}$ that is identically
zero off of $V$, with $\config(x) = \s$ indicating the presence of a single sleeping particle at $x$ and $\config(x) = k \in \N$ indicating the presence of $k$ active particles. 
A configuration is \emph{stable} if it has no active particles, i.e., it takes only
the values $0$ and $\s$.
\emph{Toppling} a site is the act of executing the next instruction on the stack there.
A toppling is \emph{legal} (resp.\ \emph{acceptable}) if it occurs at a site with an active particle 
(resp.\ with a particle).
An odometer is a map $\Z\to\NN$ that we typically think of as giving counts of topplings at each site.
In particular, any finite sequence of topplings gives rise to an odometer counting the topplings at each site.
By the abelian property \cite[Lemma~2.4]{rolla2020activated},
all legal sequences of topplings within a set $U\subseteq V$ leaving a stable configuration on $U$ have
the same odometer
and hence produce the same configuration.
For any finite subset $U\subseteq V$, we let $\Stab[U]\! \sigma$ denote the configuration
on $U$ that agrees with this stable configuration on $U$; that is, particles
ejected from $U$ in the course of the stabilization are discarded, so that
$\abs{\Stab[U]\!\sigma}$ counts the particles remaining in $U$.
Note that $\Stab[U]\!\sigma$ depends only on the particles in $\sigma$ within $U$.
We may abbreviate $\Stab[U]\!\sigma$ to $\Stab \config$ when the domain of $\config$ is clear.
Note that the odometer stabilizing a configuration on a set $U$ takes
the value $0$ off $U$.

We write $\lt{f}{v}$ and $\rt{f}{v}$ to denote the number of left and right instructions,
respectively, executed by the odometer $f$ at $v$ (i.e., the number of instructions of the given type
within the first $f(v)$ instructions at site~$v$).
 As in \cite{rolla2020activated}, we define $\s + k := k + 1$ for all $k \ge 1$. We also let $\abs{\s} := 1$ so that $\abs{\config(x)}$ denotes the total number of particles at a given site $x$, and we consider
$\NN\cup\{\s\}$ to have the total ordering $0<\s<1<2<\cdots$.
We note the oft-used fact that given the starting configuration, 
the odometer of a sequence of topplings applied to it,
and the instruction stacks, we can determine the final configuration via mass-balance equations:
\begin{lemma}\thlabel{lem:mass.balance}
  Let $\sigma\colon \ZZ\to \NN\cup\{\s\}$ be a configuration, let $\tau$ be the configuration after
  applying a finite sequence of acceptable topplings to $\sigma$, and let $f$ be the odometer
  of the toppling sequence. Then
  \begin{align}\label{eq:mass.balance}
    \abs{\tau(v)} = \abs{\sigma(v)} + \rt{f}{v-1} + \lt{f}{v+1} - \lt{f}{v} - \rt{f}{v}.
  \end{align}
  If $\abs{\tau(v)}=1$, then $\tau(v)=\s$ if and only if either $f(v)\geq 1$
  and the $f(v)$th instruction executed at site~$v$ is a sleep instruction,
  or $f(v)=0$ and $\sigma(v)=\s$.
\end{lemma}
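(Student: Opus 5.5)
The mass-balance lemma is pure bookkeeping, and I will prove it by induction on the number of topplings in the finite acceptable sequence. The base case is the empty sequence. Then $f\equiv 0$, all the left/right counts vanish, and $\tau=\sigma$, so \eqref{eq:mass.balance} holds trivially. The second assertion also holds: $f(v)=0$, and $\tau(v)=\s$ if and only if $\sigma(v)=\s$.

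For the inductive step, suppose the identity holds for the configuration $\tau'$ and odometer $f'$ after $k$ topplings. Consider one more acceptable toppling at a site $w$; it executes instruction number $f'(w)+1$ at $w$. I will check how each side of \eqref{eq:mass.balance} changes, site by site, in the three cases.
\begin{enumerate}
\item If the instruction is a jump to the right, the particle count at $w$ drops by one and the count at $w+1$ rises by one. On the right-hand side, $\rt{f}{w}$ increases by one. This term appears with a minus sign in the equation for $v=w$ and as $\rt{f}{v-1}$ in the equation for $v=w+1$. No other term at any site changes, so the changes match.
\item A left jump is symmetric.
\item A sleep instruction changes no particle counts and no left/right counts. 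With the convention $\abs{\s}=1$, the sleep either turns a lone active particle into $\s$ or does nothing, so $\abs{\tau(v)}$ is unchanged everywhere.
\end{enumerate}
Acceptability, meaning there is a particle at $w$, guarantees that the jump actually moves a particle, so the count at $w$ really decreases. If the particle at $w$ was sleeping, the toppling in effect wakes it and moves it. Here the convention $\s+k=k+1$ ensures that $\abs{\cdot}$ is additive under arrivals.

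For the sleep-status claim, take a site $v$ with $\abs{\tau(v)}=1$ and track the last event that affected the state of $v$. The state of a site changes only when it is toppled or receives a particle, and any arrival makes the site active.
\begin{itemize}
\item If $f(v)=0$, the site was never toppled. Then $\tau(v)=\s$ requires that no particle ever arrived and that $\sigma(v)=\s$. Conversely, if $\sigma(v)=\s$ and $\abs{\tau(v)}=1$, mass balance with $f(v)=0$ shows that nothing arrived, so the site is still sleeping.
\item If $f(v)\ge1$, consider the last toppling at $v$. If it was a sleep instruction and the site then held exactly one particle, the particle sleeps. Any later arrival would raise the count, and the count could not come back down without another toppling at $v$, so the particle stays asleep. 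If instead the last toppling was a jump, then the current single particle is either an arrival, hence active, or was left active by the jump. In both cases $\tau(v)\ne\s$.
\end{itemize}
The only delicate point is the subcase where the last instruction is a sleep but the site held two or more particles at that moment, so the sleep fails. After the sleep, the count stays at least two until another toppling at $v$, which contradicts $\abs{\tau(v)}=1$ with no later topplings at $v$. So this subcase cannot occur.
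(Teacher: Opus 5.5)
Your proof is correct and follows essentially the same route as the paper: you prove mass balance by counting inflow and outflow (formalized as an induction on the toppling sequence), and you prove the sleep-status claim by examining the last instruction executed at $v$, ruling out a sleep instruction at a site with several particles via $\abs{\tau(v)}=1$. The one point you leave implicit is that a final sleep instruction at an empty site is excluded by acceptability, which the paper notes explicitly.
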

\begin{proof}
  Equation~\eqref{eq:mass.balance} holds by counting the inflow
  and outflow at site~$v$.
  
  Suppose $\abs{\tau(v)}=1$. If $\sigma(v)=\s$ and $f(v)=0$, then the inflow
  to $v$ is zero or we would have $\abs{\tau(v)}>1$, 
  and hence the sleeping particle initially at $v$ is untouched,
  yielding $\tau(v)=\s$. If $f(v)\geq 1$ and the $f(v)$th instruction executed there is a
  sleep instruction, then there is exactly one particle at $v$ when 
  the instruction is executed:
  if there is no particle present, then the toppling is unacceptable;
  if there are multiple particles present, then $\abs{\tau(v)}>1$ since no further topplings occur at $v$. Thus 
  there is a sleeping particle at $v$ after this toppling, and no other particle
  moves to $v$ after this since no particle leaves and $\abs{\tau(v)}=1$.
  
  Conversely, suppose $\tau(v)=\s$. If $f(v)=0$, then no particle can fall asleep at $v$,
  and hence a sleeping particle must have been present initially, proving $\sigma(v)=\s$.
  If $f(v)\geq 1$, then the final instruction executed at $v$ must be a sleep instruction:
  after a jump instruction, there would not be a sleeping particle present at $v$,
  and there will be no other opportunity to put a particle to sleep there.
\end{proof}

We note here that by a reformulation of the abelian property \cite[Lemma 1.3]{levine2021exact}, 
\begin{equation}\label{eq:sprinkle-early}
    \config_t = \Stab[V]\!\bigg(\config_0 + \sum_{s = 1}^{t}\delta_{u_s}\bigg)
\end{equation}
for the driven-dissipative Markov chain $(\config_t)_{t \ge 0}$ on $V$ with driving $\mathbf u$.

We say that a configuration $\config$ on a finite set $V \subset \Z$ \emph{visits} a site $x \in V$ if 
$x$ contains an active particle at any time during the stabilization of $\config$ on $V$ (or equivalently
if the odometer stabilizing $\sigma$ is positive at $x$). We say that $\config$ visits the subset $U \subseteq V$ if it visits every site of $U$. The \emph{preemptive abelian property} \cite[Corollary~3.2]{hoffman2025cutoff} states that if $\config$ visits $U$, then the stabilizing odometer for $\config$ is not altered
if we wake all particles in $U$ at the start. In particular, the final configuration for the system
and the set of sites visited are unaffected by preemptively waking the particles in $U$.
To state this property more precisely, define $\A[U]\config$
 as the configuration obtained from $\config$ by waking all particles on $U$.
\begin{lemma}[Preemptive abelian property, {\cite[Corollary~3.2]{hoffman2025cutoff}}]
    For a configuration $\config$ on a finite set $V \subset \Z$, if $\config$ visits $U \subseteq V$, then 
    the odometers stabilizing $\sigma$ and $\A[U]\!\config$ are the same.
\end{lemma}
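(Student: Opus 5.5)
The plan is to compare the two stabilizing odometers directly. Let $f$ be the odometer stabilizing $\config$ on $V$ and $g$ the one stabilizing $\A[U]\!\config$ on $V$. I will prove $f\le g$ and $g\le f$ separately, using two standard consequences of the sitewise construction in \cite{rolla2020activated}.

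The first is \emph{monotonicity}: if $\config\le\config'$ pointwise in the order $0<\s<1<\cdots$, then the stabilizing odometer of $\config$ is at most that of $\config'$. The second is the \emph{least action principle}: any finite sequence of acceptable topplings within $V$ that leaves a stable configuration on $V$ has odometer at least the legal stabilizing odometer. Since $\A[U]\!\config\ge\config$ pointwise (waking replaces $\s$ by $1$), monotonicity immediately gives $f\le g$. Note that this direction does not need the hypothesis that $\config$ visits $U$.

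For $g\le f$, the idea is to run on $\A[U]\!\config$ the very same toppling sequence that legally stabilizes $\config$, and check three things.

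First, every toppling remains acceptable. The two initial configurations have the same particle counts $\abs{\cdot}$ at every site, and the same instructions are executed in the same order. Hence the particle counts at every site agree at every time, so a toppling at a site holding a particle for $\config$ is also at a site holding a particle for $\A[U]\!\config$.

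Second, the final configurations agree. Call them $\tau$ (from $\config$) and $\tau'$ (from $\A[U]\!\config$). By the mass-balance identity \eqref{eq:mass.balance} of \thref{lem:mass.balance}, $\abs{\tau(v)}=\abs{\tau'(v)}$ for every $v$, since the initial counts and the odometer $f$ are the same. It remains to compare sleep status at sites with final count $1$.
\begin{itemize}
\item Off $U$, the two initial configurations coincide, so the second part of \thref{lem:mass.balance} gives the same answer for $\tau(v)$ and $\tau'(v)$.
\item On $U$, the hypothesis that $\config$ visits $U$ means $f(v)\ge1$ for all $v\in U$. In that case \thref{lem:mass.balance} says the final state is $\s$ if and only if the $f(v)$th instruction at $v$ is a sleep instruction. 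This criterion does not involve the initial state at $v$, so $\tau(v)=\tau'(v)$ here too.
\end{itemize}
Thus $\tau'=\tau$, and in particular $\tau'$ is stable on $V$.

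Third, the least action principle, applied to this acceptable stabilizing sequence for $\A[U]\!\config$, yields $g\le f$. Combined with the first direction, $f=g$.

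The main subtle point is the second check, namely that the sleep status of the final configuration is unaffected by waking. This is exactly where the hypothesis that $\config$ visits $U$ enters, and it is why I phrase the argument through \thref{lem:mass.balance} rather than tracking individual particles. The remaining care is in citing monotonicity and least action in the form valid for finite $V$ with particles discarded at the sink; this is standard and holds since all topplings occur inside $V$.
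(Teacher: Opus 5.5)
Your proof is correct. The paper does not prove this lemma itself; it quotes it from \cite[Corollary~3.2]{hoffman2025cutoff}, so there is no in-paper argument to compare against. Your proposal is therefore a valid self-contained proof built from least action, monotonicity, and \thref{lem:mass.balance}. Its key step, matching the sleep status of the two final configurations via the second half of \thref{lem:mass.balance}, is the same device the paper uses at the end of the proof of \thref{lem:restriction}.

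One optional simplification: you can drop monotonicity and prove $f\le g$ by the mirror image of your $g\le f$ argument. Run the legal stabilizing sequence for $\A[U]\config$ on $\config$. Particle counts agree throughout, so every toppling is acceptable for $\config$. The final configurations also agree. The only sites where the two initial states differ are sites of $U$ where $\config$ has a sleeping particle and $\A[U]\config$ has a single active one, and such a site must be toppled at least once to stabilize $\A[U]\config$. So the sleep criterion there depends only on the instructions. Least action then gives $f\le g$, and both directions rest on the same mechanism.
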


We let $\1_U$ denote the configuration consisting of a single active particle
at each site in $U$. An important fact in our arguments
is that $\Stab[\ii{1,n}]\1_{\ii{1,n}}\sim\pi_n$ \cite[Theorem~2.1]{levine2021exact}.

\section{Proof of Theorem~\ref{thm:regularity}} \label{sec:regularity}
In Section~\ref{sec:sketches}, we sketched the idea
of viewing the configuration on $U\subseteq V$
after stabilization on $V$ as consisting of a random number
of steps in the driven-dissipative chain on $U$.
We begin with a sort of spatial Markov property for odometers that makes
this idea formal.
The property states that if $f$ is the odometer
stabilizing a configuration $\sigma$ on $V$, then $f\vert_U$ for any $U\subseteq V$ is the
odometer stabilizing $\sigma+\nu$ on $U$, where $\nu$ consists of all particles
sent into $U$ by $f$. See also \cite[Lemma~4.1]{hoffman2026local} for a similar statement.
We state the lemma only in the one-dimensional case though it holds
more generally. 

\begin{lemma}\thlabel{lem:restriction}
  Let $f$ be the odometer stabilizing a configuration $\sigma$ on $\ii{1,n}$.
  Then for any $1\leq a\leq b\leq n$, the odometer stabilizing 
  $\sigma+\rt{f}{a-1}\delta_a+\lt{f}{b+1}\delta_b$ on $\ii{a,b}$
  is $f\vert_{\ii{a,b}}$, and the configurations
  \begin{align*}
    \Stab[\ii{a,b}]{\bigl(\sigma+\rt{f}{a-1}\delta_a+\lt{f}{b+1}\delta_b\bigr)}
    \qquad\qquad\text{and}\qquad\qquad\Stab[\ii{1,n}]{\sigma}
  \end{align*}
  are equal on $\ii{a,b}$.
\end{lemma}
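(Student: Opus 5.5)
We will characterize the stabilizing odometer on $\ii{a,b}$ by the standard least-action criterion. Write $\sigma' := \sigma+\rt{f}{a-1}\delta_a+\lt{f}{b+1}\delta_b$, viewed as a configuration on $\ii{a,b}$, and $g := f\vert_{\ii{a,b}}$. An odometer $h$ supported on a finite set $U$ is the stabilizing odometer of a configuration on $U$ exactly when two things hold. First, $h$ is \emph{acceptable*}: it is realized by a sequence of acceptable topplings inside $U$. Second, the resulting configuration is stable on $U$. This is the least action principle together with the abelian property of \cite{rolla2020activated}: an acceptable odometer yielding a stable configuration dominates every legal odometer, and if it is moreover realized by legal topplings then it equals the stabilizing odometer. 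Legality is the cleaner route, so we aim to show that $g$ is realized by a \emph{legal} toppling sequence on $\ii{a,b}$ started from $\sigma'$, ending in a stable configuration that agrees with $\Stab[\ii{1,n}]\sigma$ on $\ii{a,b}$.

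\textbf{Step 1 (final configuration).} Apply \thref{lem:mass.balance} to $g$ on $\ii{a,b}$ started from $\sigma'$. The extra particles at $a$ and $b$ supply exactly the inflow terms $\rt{f}{a-1}$ and $\lt{f}{b+1}$. Instructions sent out of $\ii{a,b}$ are discarded, but they appear in the mass balance as outflow $\lt{f}{a}$ and $\rt{f}{b}$, just as in the full system. Hence the particle counts at each $v\in\ii{a,b}$ coincide with those of $\Stab[\ii{1,n}]\sigma$. The sleep/active status also coincides, by the second part of \thref{lem:mass.balance}, since the same last instruction is executed at $v$. The only subtlety is when $f(v)=0$: then $\sigma'(v)$ must have the same status as $\sigma(v)$. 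This is fine because adding particles at $a$ with $f(a)=0$ would have forced activity, so in that case the added count is zero. The configuration is therefore stable, and in particular the second claim follows once the first is shown.

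\textbf{Step 2 (realizability by legal topplings).} Take a legal toppling sequence on $\ii{1,n}$ from $\sigma$ with odometer $f$. Extract the subsequence of topplings at sites of $\ii{a,b}$, and apply it to $\sigma'$. We show by induction along the sequence that each such toppling is legal in the new system. At the moment a toppling at $v\in\ii{a,b}$ occurs in the original run, compare the number of particles at $v$ in the two systems. In the new system, all particles that would ever arrive from $a-1$ (resp.\ $b+1$) are already present at time zero, so the new system has at least as many particles at $v$. For sleep status, note that a sleeping particle at $v$ in the new system with one in the old system can only occur in the edge cases; there the original toppling acted on an active particle, so the new system has either the same active particle or more particles. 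Since an extra particle is active, or wakes a sleeper ($\s+k=k+1$), the site holds an active particle and the toppling is legal.

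\textbf{Main obstacle.} The hardest part is the careful monotonicity comparison in Step 2. In particular, we must track that having extra particles never turns an active site into one with only a sleeping particle. This holds because, in the ordering $0<\s<1<\cdots$, extra mass can only increase the state, and sleep instructions at multiply occupied sites have no effect. If a direct legal-coupling argument gets messy, the fallback is to prove only acceptability of $g$ together with stability from Step 1. We then combine this with the least action principle for the inequality $g\geq$ (stabilizing odometer). The reverse inequality comes from monotonicity: the stabilizing odometer of $\sigma'$ on $\ii{a,b}$ is at least what is forced by the early-delivered particles, which again gives equality.
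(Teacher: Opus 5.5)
Your proposal is correct and follows the paper's proof. Both take a legal stabilizing sequence for $\sigma$ on $\ii{1,n}$, keep only the topplings in $\ii{a,b}$, and show inductively via \thref{lem:mass.balance} that each stays legal for the pre-loaded configuration, since the particles from $a-1$ and $b+1$ arrive early, so counts only dominate; they then use both parts of \thref{lem:mass.balance} to see that the end state is stable and agrees with $\Stab[\ii{1,n}]\sigma$ on $\ii{a,b}$, and conclude by the abelian property.

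A few things to tighten. Your Step~1 presupposes Step~2, since mass balance needs an acceptable toppling sequence. The sleep-status check in Step~2 should be made rigorous the same way you handle Step~1, which is exactly how the paper does it: if the new system had a single sleeping particle at $w$, the shared last instruction at $w$ (or an untouched initial sleeper with no added particles) would force $\s$ in the original system too, contradicting legality. Your opening ``acceptable and stable'' characterization is false without legality, and neither it nor the fallback plan is needed.
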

\begin{proof}
  Take a sequence of legal topplings stabilizing $\sigma$ on $\ii{1,n}$ and 
  consider the subsequence consisting of all topplings that are within $\ii{a,b}$,
  whose odometer is $f\vert_{\ii{a,b}}$.
  It will be convenient to view $f\vert_{\ii{a,b}}$ as having the domain $\ii{1,n}$ rather
  than $\ii{a,b}$, taking the value $0$ off of $\ii{a,b}$.
  We claim that this subsequence of topplings is legal for
  $\sigma+\rt{f}{a-1}\delta_a+\lt{f}{b+1}\delta_b$ and stabilizes it on $\ii{a,b}$,
  which proves that $f\vert_{\ii{a,b}}$ is its stabilizing odometer by the abelian property 
  \cite[Lemma~2.4]{rolla2020activated}.
  The idea is that if we execute the two toppling sequences in the two systems
  side by side, the only difference
  is that the particles sent into $\ii{a,b}$ in the original system
  are present in the subsequential system from the start, and so
  the topplings in the subsequential system remain legal. 
  
  Let $v_1,\ldots,v_N\in\ii{1,n}$ be the original toppling sequence,
  and let $v_{i_1},\ldots,v_{i_M}\in\ii{a,b}$ be the subsequence.
  Let $\sigma=\sigma_0,\ldots,\sigma_N$ denote the configurations
  given by applying the original toppling sequence to $\sigma$, i.e.,
  $\sigma_i=\Phi_{(v_1,\ldots,v_i)}\sigma$ in the notation of \cite{rolla2020activated}.
  Let $\sigma'_0,\ldots,\sigma'_M$
  be the configurations given by applying the subsequence of topplings to
  $\sigma+\rt{f}{a-1}\delta_a+\lt{f}{b+1}\delta_b$. To make $\sigma'_0,\ldots,\sigma'_M$
  well defined, if the toppling to be applied to $\sigma'_i$ is illegal,
  stop toppling and set $\sigma'_i=\cdots=\sigma'_M$.
  To prove that no illegal toppling occurs, we will show for all $1\leq k\leq M$ that
  \begin{align}\label{eq:moreparticles}
    \sigma'_{k-1}(v_{i_k})\geq \sigma_{i_k-1}(v_{i_k}).
  \end{align}
  Note that this inequality uses the order $0<\s<1<\cdots$.
  The left-hand side is the state of site $v_{i_k}$ immediately before it is toppled in the
  subsequential system, and the right-hand side is the same vertex's state before being
  toppled in the original system. 
  We have $\sigma_{i_k-1}(v_{i_k})\geq 1$ by the legality of the original toppling,
  and hence \eqref{eq:moreparticles} implies the legality of the subsequential toppling.
  
  To prove that \eqref{eq:moreparticles} holds for all $k$, 
  we assume inductively that $\sigma'_{j-1}(v_{i_j})\geq \sigma_{i_j-1}(v_{i_j})$
  for all $j<k$.
  Let $g$ be the odometer of the toppling sequence
  $v_1,\ldots,v_{i_k-1}$, which is legal for $\sigma_0=\sigma$
  and takes us from $\sigma_0$ to $\sigma_{i_k-1}$. 
  The toppling sequence $v_{i_1},\ldots,v_{i_{k-1}}$ is legal for
  $\sigma'_0$ by the inductive hypothesis and takes us from $\sigma'_0$
  to $\sigma'_{k-1}$, and its odometer is $g\vert_{\ii{a,b}}$.
  Let $w=v_{i_k}$, so that our goal is to show that 
  $\sigma'_{k-1}(w)\geq \sigma_{i_k-1}(w)$.
  Now, we apply \thref{lem:mass.balance} to determine that
 \begin{alignat}{6}
  \abs{\sigma_{i_k-1}(w)} &=  
   \abs{\sigma(w)}
    && {}+ \rt{g}{w-1}
    && {}+ \lt{g}{w+1}
    && {}- \lt{g}{w}
    && {}- \rt{g}{w},\label{eq:sigma}\\\intertext{and}
    \abs{\sigma'_{k-1}(w)} &= \abs{\sigma'_0(w)}
    && {}+ \rt{g\vert_{\ii{a,b}}}{w-1}
    && {}+ \lt{g\vert_{\ii{a,b}}}{w+1}
    && {}- \lt{g\vert_{\ii{a,b}}}{w}
    && {}- \rt{g\vert_{\ii{a,b}}}{w}.\label{eq:sigma'}
\end{alignat}
  We claim now that 
  \begin{align}\label{eq:abs.version}
    \abs{\sigma'_{k-1}(w)}\geq\abs{\sigma_{i_k-1}(w)}.
  \end{align}
  Analyzing \eqref{eq:sigma'} depending on whether $w\in\{a,b\}$, 
  the first term of its right-hand side is
  \begin{align}
    \begin{aligned}
    \abs{\sigma'_0(w)} &= \abs{\sigma(w)} + \1\{w=a\}\rt{f}{w-1}
      + \1\{w=b\}\lt{f}{w+1}\\
      &\geq \abs{\sigma(w)} + \1\{w=a\}\rt{g}{w-1}
      + \1\{w=b\}\lt{g}{w+1},
    \end{aligned}\label{eq:ie1}
  \end{align}
  since $f\geq g$ pointwise. The next two terms are
  \begin{align}
    \rt{g\vert_{\ii{a,b}}}{w-1}
    +\lt{g\vert_{\ii{a,b}}}{w+1} &= \1\{w\neq a\}\rt{g}{w-1}
     + \1\{w\neq b\}\lt{g}{w+1}.\label{eq:ie2}
  \end{align}
  And for the final two terms,
  \begin{align}
    - \lt{g\vert_{\ii{a,b}}}{w}
    - \rt{g\vert_{\ii{a,b}}}{w} = 
        - \lt{g}{w}
    - \rt{g}{w}.\label{eq:ie3}
  \end{align}
  Equations \eqref{eq:ie1}--\eqref{eq:ie3} combine to prove that
  \eqref{eq:sigma'} is at least as large as \eqref{eq:sigma}, proving \eqref{eq:abs.version}.
  
  It remains now to rule out the possibility that $\sigma'_{k-1}(w)=\s$.
  Suppose this holds. 
  By \thref{lem:mass.balance}, the last
  instruction executed at $w$ under $g\vert_{\ii{a,b}}$ is a sleep
  instruction, or we have $\sigma'_0(w)=\s$ and $g\vert_{\ii{a,b}}(w)=0$.
  In the first case, the last instruction executed at $w$ under $g$
  is the same sleep instruction, and $\abs{\sigma_{i_k-1}(w)}=1$ by \eqref{eq:abs.version}
  and legality of the original toppling sequence.
  Hence $\sigma_{i_k-1}(w)=\s$ by \thref{lem:mass.balance}, contradicting 
  the legality of the original toppling sequence.
  In the second case, since $\sigma'_0(w)=\s$
  and $\sigma'_0=\sigma+\rt{f}{a-1}\delta_a+\lt{f}{b+1}\delta_b$, we must have $\sigma(w)=\s$
  as well. Since $\abs{\sigma_{i_k-1}(w)}=1$ by \eqref{eq:abs.version}
  and legality of the original toppling sequence,
  and since $g(w)=g\vert_{\ii{a,b}}(w)=0$, \thref{lem:mass.balance} again
  implies that $\sigma_{i_k-1}(w)=\s$, a contradiction.
  Thus $\sigma'_{k-1}(w)\neq\s$, and therefore \eqref{eq:abs.version}
  implies \eqref{eq:moreparticles}, completing the induction and establishing
  that the subsequence is a legal toppling sequence for $\sigma'_0$.  
  
  Finally, we claim that the result of applying the subsequence of topplings to   
  $\sigma'_0$ is a configuration matching $\Stab[\ii{1,n}]\sigma$
  on $\ii{a,b}$. This claim completes the proof, since then
  the subsequence is stabilizing for $\sigma'_0=\sigma+\rt{f}{a-1}\delta_a+\lt{f}{b+1}\delta_b$ 
  on $\ii{a,b}$, produces the same configuration as $\Stab[\ii{1,n}]\sigma$
  on $\ii{a,b}$, and has odometer $f\vert_{\ii{a,b}}$.
  The argument is nearly the same as the induction we just carried out,
  with $f$ in the role of $g$.
  By \thref{lem:mass.balance}, for any $v\in\ii{a,b}$,
  \begin{alignat}{5}
  \abs{\sigma_N(v)}&= \abs{\sigma(v)}\label{eq:sigma.again}
    && {}+ \rt{f}{v-1}
    && {}+ \lt{f}{v+1}
    && {}- \lt{f}{v}
    && {}- \rt{f}{v}\\\intertext{and}
  \abs{\sigma'_M(v)} &= \abs{\sigma'_0(v)}\label{eq:sigma'.again}
    && {}+ \rt{f\vert_{\ii{a,b}}}{v-1}
    && {}+ \lt{f\vert_{\ii{a,b}}}{v+1}
    && {}- \lt{f\vert_{\ii{a,b}}}{v}
    && {}- \rt{f\vert_{\ii{a,b}}}{v}.
  \end{alignat}
  By the same reasoning as in the deduction of \eqref{eq:abs.version}
  from \eqref{eq:sigma} and \eqref{eq:sigma'}, we claim that
  $\abs{\sigma'_M(v)}=\abs{\sigma_N(v)}$. Indeed, the last two terms on the right-hand sides
  of \eqref{eq:sigma.again} and \eqref{eq:sigma'.again} are equal, and
  \begin{align*}
    \abs{\sigma'_0(v)} &= \abs{\sigma(v)} + \1\{v=a\}\rt{f}{v-1}
      + \1\{v=b\}\lt{f}{v+1}
  \end{align*}
  while
  \begin{align*}
    \rt{f\vert_{\ii{a,b}}}{v-1} + \lt{f\vert_{\ii{a,b}}}{v+1}
      &= \1\{v\neq a\}\rt{f}{v-1} + \1\{v\neq b\}\lt{f}{v+1}.
  \end{align*}
  Now it remains to prove that when $\abs{\sigma'_M(v)}=\abs{\sigma_N(v)}=1$, we have $\sigma'_M(v)=\s$
  and not $\sigma'_M(v)=1$.
  Since $\sigma_N$ is stable on $\ii{1,n}$ 
  and $\abs{\sigma_N(v)}=1$, we have $\sigma_N(v)=\s$.
  By \thref{lem:mass.balance}, the last instruction executed
  at $v$ under $f$ is a sleep instruction, or $\sigma(v)=\s$ and $f(v)=0$.
  In the first case, the last instruction executed at $v$ under $f\vert_{\ii{a,b}}$
  is the same sleep instruction, and hence $\sigma'_M(v)=\s$.
  In the second, we have $\abs{\sigma'_0(v)}\geq 1$ since $\sigma'_0$ is the same
  as $\sigma$ plus possible extra particles at $a$ and $b$. Since $f\vert_{\ii{a,b}}(v)=0$
  we have $\abs{\sigma'_M(v)}\geq\abs{\sigma'_0(v)}$. But $\abs{\sigma'_M(v)}=1$,
  implying that $\abs{\sigma'_0(v)}=1$ and therefore that $\sigma'_0$ has only
  the original particle from $\sigma$ at $v$, meaning that $\sigma'_0(v)=\s$.
  And now $\sigma'_0(v)=\s$ and $f\vert_{\ii{a,b}}(v)=0$ together imply
  that $\sigma'_M(v)=\s$ by \thref{lem:mass.balance}.
\end{proof}

\begin{proof}[Proof of \thref{thm:regularity}]
  Let $f$ be the odometer stabilizing the configuration $\1_{\ii{1,n}}$
  on $\ii{1,n}$.
  Write $I=\ii{a,b}$, and let $L=\rt{f}{a-1}$ and $R=\lt{f}{b+1}$.
  (Note that $L$ and $R$ are the number of left and right stabilizations
  as in the sketch in Section~\ref{sec:sketches}, by the abelian property.) 
  Let
  \begin{align*}
    \tau(\ell,r)=\Stab[\ii{a,b}]\bigl(\1_{\ii{a,b}} + \ell\delta_a + r\delta_b\bigr).
  \end{align*}
  The configuration $\tau(L,R)$ is equal to $\Stab[\ii{1,n}]\1_{\ii{1,n}}$ 
  on $\ii{a,b}$ by \thref{lem:restriction}.
  
  Now, we observe that for any deterministic nonnegative integers $\ell$ and $r$,
  we have $\tau(\ell,r)\sim \pi_{b-a+1}$.
  Indeed, $\Stab[\ii{a,b}]\1_{\ii{a,b}}\sim\pi_{b-a+1}$ by \cite[Theorem~2.1]{levine2021exact},
  and adding extra particles to $\1_{\ii{a,b}}$ does not affect the distribution of its
  stabilization, since by the abelian property and strong Markov property for instruction stacks \cite[Proposition~4]{levine2021source}, we can stabilize the configuration
  by first toppling the extra particles until they reach the sink, at which point we are reduced
  to stabilizing $\1_{\ii{a,b}}$. Hence, with $m=b-a+1$, we can apply \eqref{eq:regularity} to obtain
  \begin{align}\label{eq:regularity.stationary}
    \P\Bigl(  \abs[\big]{\critFE m  -  \abs{\tau(\ell,r)}} \geq \epsilon m \Bigr) \leq Ce^{-cm}
  \end{align}
  for any fixed $\ell,r\geq 0$, for constants $C,c$ depending on $\epsilon$ and $\lambda$ 
  but not on $m$ or $n$.
  Applying this estimate to all $0\leq \ell,r\leq n^3$ by a union bound,
  \begin{align*}
    \P\Bigl(\abs[\big]{\critFE m  -  \abs{\tau(L,R)}} \geq \epsilon m \Bigr) \leq Cn^6e^{-cm} + \P(\max(L,R)>n^3).
  \end{align*}
  The proof is completed now by bounding the tails of $L$ and $R$.
  By the least-action principle, the odometer $f$ is at most the odometer given
  by acceptably toppling all $n$ particles on $\ii{1,n}$ until they reach the sink.
  Hence a crude bound on $L$ is given by the number of jumps from $a-1$ to $a$
  of $n$ independent random walks started at the sites $1,\ldots,n$ and killed at the boundary.
  The local time for each random walk
  at $a-1$ (which bounds the number of jumps from $a-1$ to $a$) 
  is exponentially unlikely to exceed $n^2$, since it is stochastically
  dominated by $\Geo(1/n)$. And hence $L$ is exponentially unlikely to exceed $n^3$.
  The same is true for $R$, thus proving 
  \begin{align*}
    \P\Bigl(\abs[\big]{\critFE m  -  \abs{\tau(L,R)}} \geq \epsilon m \Bigr) \leq Cn^6e^{-cm} + C'e^{-c'n}
  \end{align*}
  for some constants $C',c'>0$,
  which after adjusting constants proves \eqref{eq:log-regularity}.
  (We note that using the tool of layer percolation defined in
  \cite{hoffman2024density}, one could show that
  $L$ and $R$ are exponentially unlikely to exceed some constant times $n^2$,
  thereby reducing the $n^6$ factor in \eqref{eq:log-regularity} to $n^4$.)
  Finally,  we note that for any $B > 6/c$ the right-hand side of \eqref{eq:log-regularity}
  vanishes as $n \to \infty$ whenever $m \geq B\log n$.  
\end{proof}

The trick in the previous theorem---applying a union bound over successive stabilizations
of a subinterval---also works outside of stationarity. For example, we can use it to prove that after stabilizing \emph{any} configuration of active particles, the density is unlikely to be high on a subinterval. We record the observation here though we do not need it in this paper.
\begin{prop}\thlabel{prop:upper.regularity}
  Fix $\epsilon,\lambda>0$. Let $\sigma$ be any configuration of active particles on $\ii{1,n}$.
  Then for any interval $I\subseteq\ii{1,n}$ of length~$m$
  \begin{align*}
    \P\Bigl(\abs[\big]{\bigl(\Stab[\ii{1,n}]\sigma\bigr)(I)}\geq (\critFE + \epsilon) m \Bigr) \leq Cn^6e^{-cm}
  \end{align*}
  for some constants $c,C>0$ depending on $\epsilon$ and $\lambda$ but not on $m$ or $n$.
\end{prop}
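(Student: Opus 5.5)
We follow the proof of \thref{thm:regularity}, replacing the exact stationarity of $\tau(\ell,r)$ with a monotonicity comparison. Write $I=\ii{a,b}$, let $f$ be the odometer stabilizing $\sigma$ on $\ii{1,n}$, and set $L=\rt{f}{a-1}$ and $R=\lt{f}{b+1}$. By \thref{lem:restriction}, $\Stab[\ii{1,n}]\sigma$ agrees on $I$ with
\[
\tau(L,R):=\Stab[I]\bigl(\sigma|_I+L\delta_a+R\delta_b\bigr).
\]
We then union bound over deterministic values $0\le \ell,r\le n^3$ and control $\P(\max(L,R)>n^3)$ exactly as before. The random-walk comparison given by the least-action principle applies verbatim, with $|\sigma|$ particles in place of $n$. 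If $|\sigma|$ is huge, the crude bound needs adjusting. One option is to use $n^3$ replaced by a polynomial in $|\sigma|+n$. A better option is to first send all but one particle per site to the sink via the abelian and strong Markov property, as in the proof of \thref{thm:regularity}. This second route only works if it preserves $L,R$ bounds, so I would instead just note $C$ may absorb this. Indeed, the statement presumably intends $|\sigma|$ polynomial, or else one argues as below that the union bound is unnecessary.

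The key step is to bound, for fixed $\ell,r$,
\[
\P\bigl(|\tau(\ell,r)|\ge(\critFE+\epsilon)m\bigr)\le Ce^{-cm}.
\]
The configuration $\sigma|_I+\ell\delta_a+r\delta_b$ is a configuration of active particles on $I$ with some number $k$ of particles, and it need not be $\1_I$. We use monotonicity: adding active particles and stabilizing only decreases the probability of being small. Precisely, add $\1_I$ to it. If $\eta$ is any active configuration on $I$, then $\Stab[I](\eta+\1_I)\sim\pi_m$ by the argument in the proof of \thref{thm:regularity}: topple the extra particles of $\eta$ to the sink first via the strong Markov property. Moreover,
\[
\Stab[I](\eta+\1_I)=\Stab[I]\bigl(\Stab[I]\eta+\1_I\bigr).
\]
Now we must show that stabilizing $\Stab[I]\eta$ together with one extra active particle at every site cannot leave fewer particles than $|\Stab[I]\eta|$ minus a small amount. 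That is false in general, so instead we use it the other way around: with the extra particles, every site is visited. Hence, by the preemptive abelian property, we may wake all particles of $\Stab[I]\eta$. Then $\Stab[I](\eta+\1_I)$ equals the stabilization of an active configuration with at least one particle per site. This is again the stabilization of $\1_I$ plus extras, giving $\pi_m$, but this does not directly compare to $|\Stab[I]\eta|$.

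A cleaner route is to use the fixed-energy upper large deviation bound from \cite{hoffman2024density} directly. Proposition~8.5 there (the upper half of \eqref{eq:regularity}) is, I expect, proved for the stabilization of \emph{arbitrary} active initial configurations on an interval of length $m$. That is, it shows that stabilizing any active configuration on $\ii{1,m}$ leaves at least $(\critFE+\epsilon)m$ particles with probability at most $Ce^{-cm}$, uniformly in the configuration. This is exactly the upper-density statement needed; it rests on the layer percolation bound, which concerns any initial configuration. I would invoke it with $\eta=\sigma|_I+\ell\delta_a+r\delta_b$. Then the union bound over $\ell,r\le n^3$ gives $Cn^6e^{-cm}$, and the tail of $L,R$ adds $C'e^{-c'n}\le C'e^{-c'm}$.

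I expect the main obstacle to be the fixed-$\ell,r$ estimate, namely verifying that the cited upper-tail bound holds uniformly over arbitrary active initial configurations and not only for $\1_{\ii{1,m}}$. If it does not, I would try reducing to the case of at least one active particle per site, using monotonicity in the number of particles from the layer percolation framework.
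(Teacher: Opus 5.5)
Your overall structure is the paper's own. You restrict to $I$ via the restriction lemma and union bound over $0\le \ell,r\le n^3$. For each fixed $(\ell,r)$ you invoke the upper-density large deviation bound of \cite{hoffman2024density}, which holds uniformly over arbitrary active initial configurations. The paper cites Theorem~8.4 there; your guess of Proposition~8.5 is likely the wrong label, and the monotonicity detour you abandon is unnecessary.

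The genuine gap is the case where $\abs{\sigma}$ is large, which you leave unresolved.
\begin{itemize}
\item The constants must be uniform over all active $\sigma$, so ``$C$ may absorb this'' is not available. Assuming $\abs{\sigma}$ is polynomial in $n$ changes the statement.
\item Even for $\abs{\sigma}\le n^k$, your random-walk comparison only controls $\max(L,R)$ at threshold about $n^{k+2}$. The union bound then gives a prefactor $n^{2k+4}$, so you need $\abs{\sigma}\le n$ to get $n^6$.
\item Your suggested reduction, sending all but one particle per site to the sink as in the regularity theorem, fails for general $\sigma$. There, every site of $\ii{a,b}$ carries an active resident from $\1_{\ii{a,b}}$, so every toppling is at a site with at least two particles and the extras do pure random walks. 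Here an extra particle reaching an empty site is alone, may execute a sleep instruction, and cannot then be legally toppled further.
\item Your remark that the union bound might be unnecessary is also wrong. $L$ and $R$ depend on the instruction stacks inside $I$, so a bound for deterministic $(\ell,r)$ cannot simply be averaged over the law of $(L,R)$.
\end{itemize}

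The missing step, which is what the paper does, is to topple only sites containing at least two particles until every site holds at most one. Each such toppling is legal. Sleep instructions are no-ops at multiply occupied sites, so all particles stay active. The resulting configuration $\sigma'$ satisfies $\abs{\sigma'}\le n$ and has the same stabilization as $\sigma$ by the abelian property. By the strong Markov property for instruction stacks, the remaining stacks are fresh conditional on $\sigma'$, so you can run your argument on $\sigma'$. The least-action comparison with at most $n$ killed random walks then gives $\P(\max(L,R)>n^3)\le C'e^{-c'n}$, and the union bound yields $Cn^6e^{-cm}$.
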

\begin{proof}
  First, it suffices to prove the result under the assumption that $\abs{\sigma}\leq n$,
  since we can successively topple sites containing multiple particles until
  all sites are empty or contain exactly one active particle, thus producing a configuration with at most $n$ particles with the same stabilization as $\sigma$. By the strong Markov property for instruction stacks \cite[Proposition~4]{levine2021source},
  we may condition on this configuration and apply the case $\abs{\sigma}\leq n$.
  
  We now define
  \begin{align*}
    \tau(\ell,r) = \Stab[\ii{a,b}]\bigl(\sigma+\ell\delta_a+r\delta_b\bigr)
  \end{align*}
  and proceed exactly as in the proof of \thref{thm:regularity}, but instead of arguing
  that $\tau(\ell,r)$ is at stationarity and applying \eqref{eq:regularity} 
  to obtain \eqref{eq:regularity.stationary}, we apply \cite[Theorem~8.4]{hoffman2024density},
  an upper bound on density that applies to the stabilization of any configuration of active particles,
  to obtain
  \begin{align*}
    \P\Bigl(  \abs[\big]{\tau(\ell,r)} \geq (\critFE+\epsilon) m \Bigr) \leq C'e^{-c'm}.
  \end{align*}
  The rest of the argument is the same, noting that our assumption that $\abs{\sigma}\leq n$
  is what allows the argument for bounding the tails of $L$ and $R$ to go through.
\end{proof}

\section{Proof of Theorem~\ref{thm:ht}} \label{sec:ht}

Fix a sequence $(m_n)$ with $m_n\leq n/2$ and $m_n/\log n\to\infty$; we aim to show \eqref{eq:av-star}.
Take $\epsilon > 0$, and let $(\config_t)_{t \ge 0}$ be the driven-dissipative Markov chain with central driving starting from stationary initial state $\config_0 \sim \pi_n$.
Let $s_n := \ceil{2\epsilon m_n}$, and let $M_n=\max\{X_n(\sigma_t)\colon0 \le t < s_n\}$.
By stationarity, we have $X_n(\sigma_t)\eqd X_n(\sigma)$ for all $t$.
 By a union bound,
\begin{align*}
    \P(M_n \ge m_n) \le \sum_{t=0}^{s_n-1}\P\bigl(X_n(\sigma_t) \ge m_n\bigr) 
        &= s_n \P_{\pi_n}\bigl(X_n(\sigma) \ge m_n\bigr)\\
        &\le (2\epsilon \mkern 1mu m_n+1) \P_{\pi_n}\bigl(X_n(\sigma) \ge m_n\bigr).
\end{align*}
Hence
\begin{align}\label{eq:av-union-bound}
  \P_{\pi_n}\bigl(X_n(\sigma) \ge m_n\bigr) &\geq \frac{\P(M_n \ge m_n)}{ 2\epsilon \mkern 1mu m_n+1}.
\end{align}

Now we argue that $\P(M_n \ge m_n)\to 1$.
Let $J_n$ be the centered ball
\begin{align*}
  J_n:=\ii[\big]{\ceil{n/2} - m_n+1, \ceil{n/2} + m_n} \subseteq \ii{1,n}.
\end{align*}
If $M_n < m_n$, then no sites outside of $J_n$ are visited in $s_n$ steps of the driven-dissipative
chain. Thus $\abs{\config_{s_n}(J_n)} = s_n + \abs{\config_0(J_n)}$, since all $s_n$ of the 
driving particles remain in $J_n$ at step~$s_n$ of the chain.
The density on $J_n$ therefore increases by at least $s_n/2m_n\geq\epsilon$ from configuration $\config_0$
to configuration $\config_{s_n}$. Hence, if $M_n<m_n$, then either $\config_{s_n}$ has density on $J_n$
at least $\critFE+\epsilon/2$, or $\config_0$ has density at most $\critFE-\epsilon/2$.
Both events have vanishing probability by \thref{thm:regularity}, since $\config_0$ and $\config_{s_n}$ 
are both distributed as $\pi_n$ and $\abs{J_n}=2m_n$ eventually exceeds $B\log n$ for the constant 
$B=B(\lambda,\epsilon/2)$ from the theorem. Thus, $M_n \ge m_n$ with probability tending to one, and so by \eqref{eq:av-union-bound}, 
$$
    \liminf_{n \to \infty} m_n\P_{\pi_n}\bigl(X_n(\sigma) \ge m_n\bigr) \ge \frac{1}{2\epsilon}.
$$
Letting $\epsilon \searrow 0$ we obtain \eqref{eq:av-star}.

\section{Proof of Theorem~\ref{thm:remix}} \label{sec:remix}
We say that a sequence of events $(E_n)$ holds \emph{with overwhelming probability} (w.o.p.)\ if there
exist constants $c,C > 0$ not depending on $n$ such that $\P(E_n) \geq 1 - Ce^{-cn}$ for all $n$. 
We allow $c$ and $C$ to depend on the sleep rate $\lambda$ as well as the constant
$\epsilon$ specified in the statement of \thref{thm:remix}. The intersection
of finitely many events (or polynomially many in $n$)  holding with overwhelming probability
also holds with overwhelming probability by a union bound.

At various points in this section, we will need to argue that
it is likely that a particle will be ejected from $\ii{1,n}$
in $\epsilon n$ steps of the driven-dissipative chain.
Let $Z_n(k)$ be the number of particles sent to the sink
in $k$ steps of the driven-dissipative ARW chain on $\ii{1,n}$,
with any driving sequence. By \eqref{eq:sprinkle-early}, we can equivalently
define $Z_n(k)$ as the number of particles sent to the sink when stabilizing
the configuration $\sprinkledconfig := \stationaryconfig + \sum_{t=1}^{k}\delta_{u_t}$
where $\stationaryconfig\sim\pi_n$ and $\mathbf{u}=(u_t)_{t\geq 1}$ is the driving sequence.
Since the chain is at stationarity, the expected number of particles in the configuration
is constant, and hence $\E Z_n(k) = k$. Similar to an argument in the proof
of \thref{thm:ht}, the following result applies
\eqref{eq:regularity} to obtain exponential concentration of $Z_n(k)$, so long as $k=\Omega(n)$:
\begin{lemma}\thlabel{lem:particle.loss}
  For any $0<\delta<\epsilon$, it holds w.o.p.\ (with the constants in the exponential
  bound depending on $\epsilon$ and $\delta$) that 
  \begin{align*}
    \abs[\big]{Z_n\bigl(\floor{\epsilon n}\bigr) - \epsilon n} \leq \delta n.
  \end{align*}
\end{lemma}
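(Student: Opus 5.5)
The plan is to obtain the concentration of $Z_n(\floor{\epsilon n})$ directly from conservation of particles, using \eqref{eq:regularity} twice. Write $k=\floor{\epsilon n}$ and let $(\config_t)_{t\geq0}$ be the driven-dissipative chain started from $\config_0=\stationaryconfig\sim\pi_n$ with the given driving sequence. By \eqref{eq:sprinkle-early}, $\config_k=\Stab[\ii{1,n}]\sprinkledconfig$ where $\sprinkledconfig=\config_0+\sum_{t=1}^k\delta_{u_t}$. Since particles are neither created nor destroyed except by absorption at the sink, we have the exact identity
\begin{align*}
  Z_n(k) = \abs{\sprinkledconfig} - \abs{\config_k} = k + \abs{\config_0} - \abs{\config_k}.
\end{align*}
Thus
\begin{align*}
  \abs{Z_n(k)-\epsilon n} \leq 1 + \abs[\big]{\abs{\config_0}-\critFE n} + \abs[\big]{\abs{\config_k}-\critFE n}.
\end{align*}

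Next I bring in stationarity. Because the driving sequence is independent of $\config_0$ and of the particle dynamics, and because $\pi_n$ is stationary for every driving sequence by \cite[Lemma~2.4]{levine2021exact}, conditioning on $\mathbf{u}$ gives $\config_k\sim\pi_n$. The same holds for $\config_0$. Applying \eqref{eq:regularity} with $\epsilon$ replaced by $\delta/3$ to each of $\config_0$ and $\config_k$, each of the two deviation terms is at most $\delta n/3$ except on an event of probability at most $Ce^{-cn}$, with constants depending on $\delta$ and $\lambda$. A union bound over the two events then yields $\abs{Z_n(k)-\epsilon n}\leq 1+2\delta n/3\leq\delta n$ for all $n\geq 3/\delta$. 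Small $n$ are handled by enlarging $C$.

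I expect no real obstacle here. The only points needing care are two. First, $\config_k$ is genuinely distributed as $\pi_n$ even when the driving is random, which follows by conditioning on $\mathbf{u}$ and using independence. Second, the rounding $\floor{\epsilon n}$ versus $\epsilon n$ contributes only an $O(1)$ error, which is absorbed into $\delta n$. The hypothesis $\delta<\epsilon$ is not needed for the argument; it only makes the statement meaningful, since it guarantees $Z_n>0$.
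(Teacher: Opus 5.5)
Your proposal is correct and is essentially the paper's proof. The paper likewise uses the conservation identity $\abs{\Stab\tau} = \abs{\sigma} + \floor{\epsilon n} - Z_n(\floor{\epsilon n})$, notes that $\sigma$ and $\Stab\tau$ are both distributed as $\pi_n$, and applies \eqref{eq:regularity} at scale $\delta n/3$ to each. Your explicit handling of the rounding error and of random driving (by conditioning on $\mathbf{u}$) fills in details the paper leaves implicit.
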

\begin{proof}
  According to the second definition of $Z_n(k)$, we have
  \begin{align}\label{eq:stabtau}
    \abs{\Stab\tau} = \abs{\sigma}  + \floor{\epsilon n} - Z_n\bigl(\floor{\epsilon n}\bigr).
  \end{align}
  Since $\Stab\tau$ and $\sigma$ are both distributed as $\pi_n$, both $\abs{\Stab\tau}$ and $\abs{\sigma}$
  are within $\delta n/3$ of $\critFE n$ w.o.p.
  by \eqref{eq:regularity}, which combined with \eqref{eq:stabtau} completes the proof.
\end{proof}

\subsection{Central driving}
Fix $\epsilon > 0$.
Denote the center vertex by $\xmid := \lceil n / 2 \rceil$. By \eqref{eq:sprinkle-early}, we can view $\config_{\epsilon n}$ as the stabilization of
$$\tau := \sigma + \floor{\epsilon n} \delta_{\xmid}.$$ 
Applying \cite[Theorem 1.2]{hoffman2025cutoff} conditionally
given $\sigma$, it follows that
  \begin{align}\label{eq:visit-bound}
    \tvnorm{\QuenchedP^{\stationaryconfig}(\sigma_{\epsilon n} \in \cdot \: ) -\pi_n} &\leq n^3 \QuenchedP^{\sigma}\bigl(\tau \text{ does not visit all of } \In\bigr)  + C'e^{-c' n} \text{ a.s.}
  \end{align}  
  for all $n \ge 1$ and some $C',c'>0$ not depending on $n$.
  (The $C'e^{-c'n}$ term is a bound on $\P(H\geq n^3)$ in \cite[eq.~(4)]{hoffman2025cutoff}
  that holds because $n$ random walks in $\ii{1,n}$ all visit the sink in $n^3$ steps w.o.p.\ 
  by the argument at the end of the proof of \thref{thm:regularity}.)
  By taking the expectation
  of \eqref{eq:visit-bound} with respect to 
  $\sigma\sim\pi_n$, the following proposition proves \eqref{eq:remix} with central driving.

\begin{prop} \thlabel{prop:visit}
  If $\sigma\sim\pi_n$, then $\tau$ visits $\ii{1,n}$ w.o.p.
\end{prop}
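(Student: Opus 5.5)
The proof has two stages: first show that $\tau$ visits a full half-interval, then grow the visited interval one site at a time until it covers $\ii{1,n}$.

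\emph{Stage one: reaching the boundary.} By \thref{lem:particle.loss} with $\delta=\epsilon/2$, the stabilization of $\tau$ sends at least $\epsilon n/2\geq 1$ particles to the sink w.o.p. All the extra particles of $\tau$ start at $\xmid$. In one dimension, a particle can only reach the sink at $0$ (resp.\ $n+1$) if some active particle has passed through every site between $\xmid$ and $1$ (resp.\ $n$). Hence w.o.p.\ $\tau$ visits $\ii{1,\xmid}$ or $\ii{\xmid,n}$. By symmetry it suffices to treat the first case: we intersect with the event that $\tau$ visits $\ii{1,\xmid}$ and show that, on this event, $\tau$ visits $\ii{1,n}$ except on a set of exponentially small probability.

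\emph{Stage two: nucleation.} The key input is a lemma (the \thref{lem:nucleate} from the sketch) of roughly this form: for $x\geq \xmid$,
\[
\P\bigl(\tau \text{ visits } \ii{1,x} \text{ but not } x+1\bigr)\leq Ce^{-cn}.
\]
A union bound over the at most $n$ values of $x$ then shows that, w.o.p., visiting $\ii{1,\xmid}$ forces visiting $\ii{1,n}$.

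To prove the nucleation lemma, I would use the preemptive abelian property. If $\tau$ visits $\ii{1,x}$, then stabilizing $\tau$ has the same odometer as stabilizing $\A[\ii{1,x}]\tau$, in which all particles of $\sigma$ on $\ii{1,x}$ are awake. Suppose in addition that $x+1$ is never visited. Then the stabilization of $\A[\ii{1,x}]\tau$ on $\ii{1,n}$ coincides with its stabilization on $\ii{1,x}$ with a sink at $x+1$ that never receives a particle, and the configuration on $\ii{x+1,n}$ is untouched. So on this event, stabilizing the all-active configuration on $\ii{1,x}$ sends nothing to the right sink. That all-active configuration consists of the particles of $\sigma(\ii{1,x})$ together with the $\floor{\epsilon n}$ particles at $\xmid$. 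Since $x\geq\xmid\geq n/2$, the interval $\ii{1,x}$ has length of order $n$. By \thref{thm:regularity} applied with $m=x$, the particles of $\sigma$ on $\ii{1,x}$ number at least $(\critFE-\epsilon/8)x$ w.o.p., so the awake configuration has density at least $\critFE+\epsilon/4$ on $\ii{1,x}$.

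\emph{Main obstacle.} It remains to show that a supercritical active configuration on an interval of length of order $n$ sends a particle to a specified endpoint (here the right one) with overwhelming probability. This is delicate, because $\sigma$ is random and its conditional law is correlated with the instruction stacks. To handle this, I would decouple by bounding from below, in the abelian order, by a deterministic-type configuration. Concretely, I would condition on $\sigma$ restricted to $\ii{1,x}$; the instructions on $\ii{1,x}$ used by this restricted stabilization can be taken fresh via the restriction lemma, \thref{lem:restriction}. Then I would invoke the large-deviation bound \cite[Theorem~8.4]{hoffman2024density}, as in \thref{prop:upper.regularity}. That bound says $\abs{\Stab[\ii{1,x}]\eta}\leq(\critFE+\epsilon/8)x$ w.o.p.\ for any active $\eta$, which forces roughly $\epsilon x/8$ particles to exit $\ii{1,x}$.

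A further argument is needed to make some of these exits go through the right end rather than all leaving at $0$. I would handle this by applying the same density bound to the restriction to $\ii{\xmid,x}$: the $\epsilon n$ driving particles at $\xmid$ alone already make that subinterval supercritical, and the mass must escape through one side. If it could all escape to the left, iterating the argument would contradict the mass balance on the left half. I expect making this step rigorous, given the randomness of $\sigma$, to be the hardest part of the proof.
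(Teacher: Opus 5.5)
\textbf{There is a genuine gap, in the nucleation step.} Your Stage one and your site-by-site iteration via the preemptive abelian property match the paper's proof. The gap is the step you flag yourself: showing that $\A[\ii{1,x}]\tau$ visits $x+1$ with overwhelming probability. Your proposed route cannot close it. The density upper bound of \cite[Theorem~8.4]{hoffman2024density}, combined with mass balance, tells you how many particles must leave $\ii{1,x}$, but not through which end. On the event that nothing reaches $x+1$, the excess from $\ii{\xmid,x}$ can flow left through $\ii{1,\xmid-1}$ into the sink at $0$. That sink absorbs any amount, so ``iterating'' yields no contradiction with mass balance on the left half.

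In fact your intermediate claim is false as stated. You claim that an active configuration of supercritical total density on $\ii{1,x}$ reaches the right endpoint with overwhelming probability. Suppose instead all $(\critFE+\epsilon)x$ particles sit at site~$1$. The weighted mass $\sum_v v\abs{\eta(v)}$, counting exited particles at positions $0$ and $x+1$, is a martingale under toppling and starts at $(\critFE+\epsilon)x$. So the expected number of particles reaching $x+1$ is at most $(\critFE+\epsilon)x/(x+1)$. That bound is less than $1$ for small $\epsilon$, since $\critFE<1$, so reaching $x+1$ is far from overwhelmingly likely. The hypothesis must control where the mass sits, not just how much there is.

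\textbf{The missing ingredient is positional information.} The paper supplies it through the weighted-mass criterion \eqref{eq:com-implies-nucleation} from \cite[Proposition~4.1]{hoffman2025cutoff}: if $M_{1,x}(\tau)\geq(\critFE+\alpha)x^2/2$, then $\tau$ nucleates right at $x$ with probability at least $1-Ce^{-cx}$. Left nucleation uses the mirror-image statistic. Verifying this hypothesis takes three steps.
\begin{enumerate}
\item Establish regularity of $\sigma$ on every subinterval of length at least $\beta n$, not just on $\ii{1,x}$. This comes from \thref{thm:regularity} plus a union bound over the $O(n^2)$ such intervals.
\item Apply \thref{lem:weighted.mass.regularity} to convert this into $M_{1,x}(\sigma)\geq(\critFE-11\beta)x^2/2$.
\item Add the driving particles at $\xmid$, which contribute $\floor{\epsilon n}\xmid\geq\epsilon x^2/4$ for large $n$, pushing $M_{1,x}(\tau)$ above the threshold once $\beta$ is small relative to $\epsilon$.
\end{enumerate}

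\textbf{Your correlation concern is unfounded.} $\sigma\sim\pi_n$ is independent of the instruction stacks used to stabilize $\tau$. You can simply condition on $\sigma$ and apply the deterministic criterion; no decoupling through \thref{lem:restriction} is needed.
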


To prove this proposition, we will show that if all particles on a large enough interval
are activated, they are likely to visit a slightly larger interval, thus setting in motion
a chain reaction. For a configuration $\sigma$, say that 
\begin{itemize}
  \item $\config$ \emph{nucleates right} at $x \in \ii{1, n-1}$ if $\A[\ii{1, x}]\config$ visits $x + 1$;
  \item $\config$ \emph{nucleates left} at $y \in \ii{2, n}$ if $\A[\ii{y, n}]\config$ visits $y - 1$;
  \item $\config$ \emph{thoroughly nucleates} if it nucleates right at all $x \in \ii{\xmid, n-1}$ and nucleates left at all $y \in \ii{2, \xmid}$.
\end{itemize}
We will use a criterion for nucleation from \cite{hoffman2025cutoff}.
For a configuration $\config$ on $\ii{1,n}$ and $1 \le a \le b \le n$, define the weighted mass statistic 
\begin{align}\label{eq:weighted.mass}
    M_{a,b}(\config) := \sum_{x=a}^b x\abs{\config(x)}.
\end{align}
For any $\alpha > 0$, \cite[Proposition 4.1]{hoffman2025cutoff} states that there are constants $c,C > 0$ depending on $\lambda$ and $\alpha$ but not $x$ or $n$ such that
\begin{equation}\label{eq:com-implies-nucleation}
    M_{1,x}(\config) \geq \frac{(\critFE + \alpha)x^2}{2} \;\implies\; \P(\config \text{ nucleates right at } x) \geq 1 - Ce^{-cx}.
\end{equation}
An analogous criterion for left-nucleation is obtained by modifying \eqref{eq:weighted.mass}
to weight the mass at position $x$ by $n-x+1$ rather than $x$.

One should think of $(\critFE+\alpha)x^2/2$ as representing an idealized value of $M_{1,x}(\config)$
for a configuration $\config$ of uniform density $\critFE+\alpha$. The next lemma is a routine
if technical calculation confirming that if $\config$ is close to uniform density $\rho$, 
then $M_{1,x}(\config)$ will be close to $\rho x^2/2$.
\begin{lemma}\thlabel{lem:weighted.mass.regularity}
  Let $0<\beta\leq\gamma$, and suppose that a configuration $\sigma$ on $\In$
  has density at least $\rho$ on all subintervals of length at least $\beta n$.
  Then for all $x\geq \gamma n$ and $n\geq 4/\beta$,
  \begin{align}\label{eq:weighted.mass.regularity}
    M_{1,x}(\sigma) \geq \frac{(1-5\beta/\gamma)\rho x^2}{2}.
  \end{align}
\end{lemma}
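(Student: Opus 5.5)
We plan to prove the bound by cutting $\ii{1,x}$ into consecutive blocks of equal length and applying the density hypothesis to each block. The right edge of each block lower-bounds the weight of the particles inside it. Set $k=\ceil{\beta n}$, so $k\geq\beta n$ and every block of length $k$ has at least $\rho k$ particles. Write $x=qk+r$ with $0\leq r<k$. Partition $\ii{1,qk}$ into the blocks $B_j=\ii{(j-1)k+1,jk}$ for $j=1,\ldots,q$, and discard the leftover sites of $\ii{qk+1,x}$. Since weights are nonnegative, discarding can only lower $M_{1,x}(\sigma)$.

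Each site in $B_j$ has weight at least $(j-1)k+1>(j-1)k$. Hence
\begin{align*}
M_{1,x}(\sigma)\geq\sum_{j=1}^q (j-1)k\cdot\rho k=\frac{\rho k^2 q(q-1)}{2}.
\end{align*}
We then use $qk> x-k$ and $(q-1)k> x-2k$ to get
\begin{align*}
M_{1,x}(\sigma)\geq \frac{\rho(x-k)(x-2k)}{2}\geq\frac{\rho x^2}{2}\Bigl(1-\frac{3k}{x}\Bigr).
\end{align*}
The second inequality uses $(x-k)(x-2k)\geq x^2-3kx$. If $x<2k$ the middle expression may be meaningless, but then the claim is trivial, as discussed below.

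The main step is bounding $k/x$. We have $k\leq\beta n+1\leq \beta n+\beta n/4$, using $n\geq 4/\beta$, so $k\leq \tfrac54\beta n$. With $x\geq\gamma n$ this gives $3k/x\leq \tfrac{15}{4}\beta/\gamma\leq 5\beta/\gamma$, which yields \eqref{eq:weighted.mass.regularity}.

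Edge cases need checking but should cause no trouble. If $1-5\beta/\gamma\leq0$, the right-hand side is nonpositive and the claim holds trivially. Otherwise $x\geq \gamma n>5\beta n\geq 4k$, so $q\geq 2$ and the factors $x-k$ and $x-2k$ are positive. We also need blocks of length exactly $k\leq n$ lying inside $\In$, which holds since $qk\leq x\leq n$. The only real obstacle is keeping the constant at $5$ despite the rounding in $\ceil{\beta n}$, and this is exactly what the hypothesis $n\geq4/\beta$ handles.
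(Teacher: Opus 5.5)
Your proof is correct and is essentially the paper's argument: cut $\ii{1,x}$ into blocks of length $\lceil \beta n\rceil$, bound each block's weighted mass below by its left endpoint times $\rho\lceil\beta n\rceil$, sum the arithmetic series, and absorb the rounding using $\lceil\beta n\rceil\leq \tfrac54\beta n$, which follows from $n\geq 4/\beta$. You use the slightly tighter $(x-k)(x-2k)\geq x^2-3kx$ where the paper uses $(x-2L)^2\geq x^2-4Lx$, your edge-case handling is equivalent, and your opening sentence says the \emph{right} edge lower-bounds the weights when it is the left edge, which is what your computation actually uses.
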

\begin{proof}
Let $L=\ceil{\beta n}$. We will partition $\ii{1,x}$ into intervals of length~$L$
and compute the weighted mass on each subinterval.
Since $\sigma$ has density at least $\rho$ on $\ii{a,a+L-1}$, we have
$M_{a,a+L-1}(\sigma)\geq a\rho L$. Hence, with $K=\floor{x/L}$ and assuming that $x/L\geq 2$,
\begin{align*}
  M_{1,x}(\sigma) \geq \sum_{j=0}^{K-1}M_{jL+1, (j+1)L}(\sigma)\geq
  \sum_{j=0}^{K-1}\rho L^2j \geq \frac{\rho L^2(\frac{x}{L}-2)^2}{2}
  \geq \frac{\rho (x^2-4Lx)}{2}.
\end{align*}
We can in fact drop the assumption $x/L\geq 2$, since the final bound is negative in that case and the statement holds trivially. Now we have $4L\leq 4\beta n+4\leq 5\beta n$ since $n\geq 4/\beta$,
and hence from $x\geq \gamma n$ we obtain $4Lx\leq 5\beta x^2/\gamma$, which proves \eqref{eq:weighted.mass.regularity}.
\end{proof}

\thref{lem:weighted.mass.regularity} together with \eqref{eq:com-implies-nucleation}
easily imply that $\tau$ thoroughly nucleates.

\begin{lemma}\thlabel{lem:nucleate}
If $\sigma\sim\pi_n$, then $\sprinkledconfig $ thoroughly nucleates w.o.p.
\end{lemma}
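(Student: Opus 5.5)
We need to show that w.o.p.\ $\tau$ nucleates right at every $x \in \ii{\xmid, n-1}$ and nucleates left at every $y\in\ii{2,\xmid}$. We treat the right case; the left case is symmetric, using the reflected weighted mass. Since there are at most $n$ values of $x$, a union bound shows that it suffices to prove that for each fixed $x\geq \xmid$, $\tau$ nucleates right at $x$ with probability at least $1-Ce^{-cn}$, with constants uniform in $x$.

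The plan is to verify the hypothesis of \eqref{eq:com-implies-nucleation} with some fixed $\alpha>0$.

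\begin{enumerate}
\item Regularity of $\sigma$. Fix a small $\beta>0$, to be chosen in terms of $\epsilon$, and a small $\eta>0$. Apply \thref{thm:regularity} with $\eta$ in place of $\epsilon$ to each of the at most $n^2$ intervals of length at least $\beta n$. A union bound shows that w.o.p.\ $\sigma$ has density at least $\critFE-\eta$ on every such interval, since $Cn^8e^{-c\beta n}$ is still exponentially small.

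\item Lower bound on the weighted mass of $\sigma$. Apply \thref{lem:weighted.mass.regularity} with $\gamma=1/2$ and $\rho=\critFE-\eta$. For $x\geq n/2$ it gives $M_{1,x}(\sigma)\geq (1-10\beta)(\critFE-\eta)x^2/2$.

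\item Contribution of the added particles. The configuration $\tau$ has $\floor{\epsilon n}$ extra particles at $\xmid\leq x$. This adds at least $\xmid\floor{\epsilon n}\geq \epsilon n^2/3\geq \epsilon x^2/3$ to $M_{1,x}$ for large $n$, using $x\leq n$. Hence
\begin{align*}
M_{1,x}(\tau)\geq \Bigl((1-10\beta)(\critFE-\eta)+\tfrac{2\epsilon}{3}\Bigr)\frac{x^2}{2}.
\end{align*}
Choose $\beta,\eta$ small enough that the bracket is at least $\critFE+\epsilon/3$.

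\item Nucleation. Apply \eqref{eq:com-implies-nucleation} with $\alpha=\epsilon/3$, conditionally on $\sigma$ on the good event. The instruction stacks are independent of $\sigma$, so this gives nucleation at $x$ with probability at least $1-Ce^{-cx}\geq 1-Ce^{-cn/2}$, since $x\geq n/2$.
\end{enumerate}

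The only mildly delicate point is independence. The event that $\sigma$ is regular depends on the stacks used to generate $\sigma$, whereas nucleation of $\tau$ uses fresh stacks. Conditioning on $\sigma$ handles this, since $\sigma$ is independent of the stacks used in stabilizing $\tau$.

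No step is a real obstacle; the argument is a bookkeeping combination of \thref{thm:regularity}, \thref{lem:weighted.mass.regularity}, and \eqref{eq:com-implies-nucleation}. The key point to get right is the choice of constants: the density deficit $\eta$ and the discretization loss $10\beta$ must be dominated by the $\epsilon$ gain from the added particles.
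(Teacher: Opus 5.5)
Your proposal is correct and follows essentially the same route as the paper. The steps match: density at least $\critFE-\eta$ on all intervals of length at least $\beta n$ via \thref{thm:regularity} and a union bound, then \thref{lem:weighted.mass.regularity} with $\gamma=1/2$, the extra weighted mass $\xmid\floor{\epsilon n}$ from the driving particles, and \eqref{eq:com-implies-nucleation}, with the left case by reflection; your explicit conditioning on $\sigma$ and the union bound over $x$ (using $e^{-cx}\le e^{-cn/2}$) simply spell out what the paper leaves implicit.
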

\begin{proof}
  
Fix a small $\beta>0$, and apply \thref{thm:regularity} together
with a union bound over the $O(n^2)$ many intervals of length at least $\beta n$
to deduce that all of them have particle density at least $\critFE-\beta$ w.o.p.
By \thref{lem:weighted.mass.regularity} applied with $\gamma=1/2$, it holds for any $x\geq \xmid$ that 
$M_{1,x}(\sigma)\geq (1-10\beta)(\critFE-\beta)x^2/2\geq(\critFE-11\beta)x^2/2$ w.o.p.
Hence
\begin{align*}
  M_{1,x}(\tau) \geq \frac{(\critFE-11\beta)x^2}{2} + \floor{\epsilon n}\frac{n}{2}
  \geq \frac{(\critFE-11\beta+\epsilon/2)x^2}{2}\text{ w.o.p.},
\end{align*}
and choosing $\beta$ small enough relative to the given choice of $\epsilon$,
we apply \eqref{eq:com-implies-nucleation} and conclude
that $\tau$ nucleates right at all $x\in\ii{\xmid,n-1}$ w.o.p.
Using a nearly identical argument to prove left-nucleation
(the symmetry is imperfect when $n$ is even), we conclude
that $\tau$ thoroughly nucleates w.o.p.
\end{proof}

\begin{proof}[Proof of \thref{prop:visit}]
By \thref{lem:particle.loss}, at least one particle is sent to the sink
during the stabilization of $\sprinkledconfig$ w.o.p.
And by \thref{lem:nucleate}, the configuration $\tau$ thoroughly nucleates w.o.p.
Now, assume these two events hold, and we show that $\tau$ visits $\ii{1,n}$.
Suppose first that a particle reaches the sink via the left boundary. Then $\sprinkledconfig$ visits all of $\ii{1,\xmid}$, since the set of all visited sites must be a connected interval containing $\xmid$. Therefore $\sprinkledconfig$ and $\A[\ii{1,\xmid}]\sprinkledconfig$ visit the same sites by the preemptive abelian property.
Thus $\sprinkledconfig$ visits $\xmid+1$, since $\A[\ii{1,\xmid}]\sprinkledconfig$ visits $\xmid+1$
as a consequence of $\sprinkledconfig$ thoroughly nucleating.
Now $\sprinkledconfig$ visits all of $\ii{1,\xmid+1}$, and we can repeat the argument to show
that it visits $\xmid+2$. Continuing in this way, we conclude that $\sprinkledconfig$ visits all of $\In$. 
The same argument using left nucleation rather than right nucleation
handles the case where a particle reaches the right boundary.
\end{proof}

\subsection{Uniform driving} \label{sec:uniform}
Let $\sprinkledconfig := \stationaryconfig + \sum_{t=1}^{\floor{\epsilon n}}\delta_{u_t}$ 
for uniform driving $\mathbf{u}$.
As with central driving, \thref{thm:remix} under uniform driving follows
from the following proposition by \eqref{eq:visit-bound}.
\begin{prop}\thlabel{prop:uniform.visiting}
  If $\sigma\sim\pi_n$,
  then $\sprinkledconfig$ visits $\In$ w.o.p. 
\end{prop}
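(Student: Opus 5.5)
Under uniform driving no single site receives enough sprinkled mass to force nucleation near the center, so the argument has two parts. First, a macroscopic boundary interval is visited. Then, from that interval, nucleation proceeds all the way across $\In$.

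\emph{Step 1: a macroscopic boundary interval is visited.} By \thref{lem:particle.loss} with $\delta=\epsilon/2$, at least $\epsilon n/2$ particles reach the sink w.o.p. Without loss of generality at least $\epsilon n/4$ of them exit through the left boundary. The set of visited sites is a union of intervals, each containing a site where $\tau$ has an active particle. Fix $\eta>0$ small relative to $\epsilon$ and $\lambda$, and let $A$ be the event that $\tau$ does not visit $\ii{1,\eta n}$. On $A$, the left exits come from the visited component containing $1$, which is some $\ii{1,x}$ with $x<\eta n$. By \thref{lem:restriction} and the abelian property, these exits are produced by stabilizing the particles of $\tau$ lying in $\ii{1,x}$ on $\ii{1,x+1}$ with a sink at $x+1$ (site $x+1$ is unvisited, so nothing is ever sent back from it).

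The number of particles of $\tau$ in $\ii{1,\eta n}$ is at most $\eta n$ from $\sigma$ plus the driving particles landing there, which is $\Bin(\floor{\epsilon n},\eta)$ and hence at most $2\epsilon\eta n$ w.o.p. Since $\eta(1+2\epsilon)<\epsilon/4$ for small $\eta$, there are not enough particles in $\ii{1,x}$ to supply $\epsilon n/4$ exits. So $A$ fails w.o.p., and $\tau$ visits $\ii{1,\ceil{\eta n}}$ w.o.p.

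\emph{Step 2: nucleation from $\eta n$ onward.} I would adapt \thref{lem:nucleate}. Apply \thref{thm:regularity} with a union bound over the $O(n^2)$ intervals of length at least $\beta n$, so that all of them have density at least $\critFE-\beta$ w.o.p. Then \thref{lem:weighted.mass.regularity} with $\gamma=\eta$ gives
\[
M_{1,x}(\sigma)\geq (1-5\beta/\eta)(\critFE-\beta)x^2/2 \quad\text{for all } x\geq\eta n .
\]
The driving particles add extra mass. With $\epsilon n$ uniform points, Hoeffding's inequality applied to the bounded variables $u_t\1\{u_t\leq x\}$, together with a union bound over $x$, shows that w.o.p.
\[
\sum_{t}u_t\1\{u_t\le x\}\geq \frac{\epsilon x^2}{2}-o(1)\,x^2 \quad\text{for all } x\geq \eta n .
\]
So for all such $x$,
\[
M_{1,x}(\tau)\geq (\critFE-C\beta/\eta+\epsilon/2)x^2/2 .
\]
Choosing $\beta\ll\eta\epsilon$ and applying \eqref{eq:com-implies-nucleation} with a union bound gives right-nucleation at every $x\in\ii{\ceil{\eta n},n-1}$ w.o.p., since $e^{-cx}\le e^{-c\eta n}$. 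The symmetric statement, using the weight $n-x+1$, gives left-nucleation at every $y\leq n-\eta n$.

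\emph{Step 3: conclusion.} As in the proof of \thref{prop:visit}, since $\tau$ visits $\ii{1,\ceil{\eta n}}$, the preemptive abelian property combined with right-nucleation shows that $\tau$ visits $\ii{1,\ceil{\eta n}+1}$. Iterating this gives that $\tau$ visits all of $\In$. The right-boundary case is symmetric.

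The main obstacle is Step 1: turning "many particles exit on the left" into "a macroscopic interval is visited". I expect the counting argument above to be the clean route. The subtle point is justifying that, on $A$, the left exits come only from particles initially in the left visited component; this needs the sink-like role of the unvisited site $x+1$, via \thref{lem:restriction}.
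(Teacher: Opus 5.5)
Your proposal is correct and follows essentially the same route as the paper. The paper also combines many exits (via \thref{lem:particle.loss}) with few particles near each boundary to force a macroscopic boundary interval to be visited. It takes $\eta=\epsilon/10$ and simply notes that an unvisited site blocks particles to its right from reaching the left sink, so \thref{lem:restriction} is not needed there. It then spreads the visit across $\In$ by nucleation, using \thref{lem:weighted.mass.regularity}, \eqref{eq:com-implies-nucleation}, and the preemptive abelian property.

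The only variation is in the weighted-mass bound. The paper first shows that $\tau$ itself has density at least $\critFE+\epsilon/2$ on every interval of length at least $\beta n$, and applies \thref{lem:weighted.mass.regularity} directly to $\tau$. You instead split off the driving mass and use Hoeffding. Either works, but your ``$-o(1)x^2$'' should be a fixed $-\delta x^2$ to get an exponential bound.
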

\begin{proof}

Fix $\epsilon>0$. We can assume that $\epsilon\leq 1$, since if $\stationaryconfig + \sum_{t=1}^{N_0}\delta_{u_t}$ visits $\ii{1,n}$, then $\stationaryconfig + \sum_{t=1}^{N}\delta_{u_t}$ visits $\ii{1,n}$ for all $N\geq N_0$.
Let $I_n^- := \ii{1, \epsilon n / 10}$ and $I_n^+ := \ii{n-\epsilon n/10 +1, n}$. We show that $\sprinkledconfig$ visits $\In$ when the following three events hold.

\begin{enumerate}[label = (\roman*)]
  \item\label{event:spillage} At least $\epsilon n/2$ particles exit $\ii{1,n}$
    during the stabilization of $\tau$.
  \item\label{event:nucleation} $\sprinkledconfig$ nucleates right at each $\epsilon n / 10 \le x \le n - 1$ and left at each $2 \le y \le n-\epsilon n / 10+1$.
  \item\label{event:initial-spread} Both $\big|\sprinkledconfig(I_n^-)\big|$ and $\big|\sprinkledconfig(I_n^+)\big|$ are smaller than $\epsilon n / 4$.
\end{enumerate}
Suppose that~\ref{event:spillage}--\ref{event:initial-spread} hold. If $\sprinkledconfig$ does not visit some $x \in I_n^-$, then only particles
from $\sprinkledconfig(\ii{1, x-1})$ can reach the left boundary during the stabilization of $\sprinkledconfig$, so fewer than $\epsilon n / 4$ do so by~\ref{event:initial-spread}, and similarly when $I_n^+$ is not fully visited. By~\ref{event:spillage} though, one of the boundaries must receive at least $\epsilon n / 4$ particles, so in fact either all of $I_n^+$ or all of $I_n^-$ is visited, and thus all of $\In$ by~\ref{event:nucleation}, arguing via the preemptive abelian property as in the proof of \thref{prop:visit}. Thus all that remains is to show that \ref{event:spillage}--\ref{event:initial-spread} occur w.o.p.

Event~\ref{event:spillage} holds w.o.p.\ by \thref{lem:particle.loss}.
Event~\ref{event:initial-spread} holds w.o.p.\ because $\stationaryconfig$ has at most one particle per site, and a $\Bin(\floor{\epsilon n}, p)$-distributed number of the uniformly added particles will be placed within distance $\epsilon n /10$ of each boundary for $p = \lfloor \epsilon n / 10 \rfloor / n \le \epsilon / 10$. Basic concentration estimates for the binomial distribution ensure that w.o.p.\ the number of additional particles added within $\epsilon n /10$ of each boundary vertex is no more than $\epsilon n /9$. Hence $\sprinkledconfig$ contains no more than $\epsilon n /10 + \epsilon n /9 < \epsilon n /4$ particles within distance $\epsilon n/10$ of each boundary w.o.p. 

Finally, for event~\ref{event:nucleation}, fix some $\beta>0$ to be specified shortly.
Using \thref{thm:regularity} on $\stationaryconfig$ and binomial concentration on the 
driving particles, we conclude that $\sprinkledconfig$ has density at least $\critFE + \epsilon/2$ on 
all intervals with length at least $\beta n$ w.o.p.
Applying \thref{lem:weighted.mass.regularity} with $\gamma=\epsilon/10$
and choosing $\beta$ small enough that $5\beta/\gamma<\epsilon/4$, then for all
$x\geq \epsilon n/10$ we have $M_{1,x}(\sprinkledconfig)\geq(\critFE+\epsilon/8)x^2/2$ w.o.p.\ (we use $\epsilon\leq 1$ here). Now \eqref{eq:com-implies-nucleation} implies that $\sprinkledconfig$
nucleates right for all $x \ge \epsilon n / 10$, and left nucleation follows by symmetry.
\end{proof}

\subsection{Extensions}\label{sec:mixing.extensions}
As we mentioned in the introduction, to test the robustness of \thref{thm:remix}
we can generalize the driving sequence or the starting configuration.
For generalizing the driving sequence, we expect it
would be straightforward to adapt the proof of \thref{prop:uniform.visiting} to prove that the chain 
remixes from stationarity in $o(n)$ time so long as the driving sequence places a positive 
fraction of the particles in the bulk of
$\ii{1,n}$, i.e., at a macroscopic distance from the boundary.
Some sort of requirement of bulk driving is necessary by
\thref{thm:ce}, which states that the remixing time from stationarity is $\Omega(n)$
with driving from site~$1$.

For generalizing the assumption of starting the chain at stationarity, the goal would be to prove that
the chain still mixes in $o(n)$ time when starting from a configuration $\sigma$ close everywhere
in density to $\critFE$.
For example, say that a sequence of configurations $(\sigma^n)_{n\geq 1}$ of sleeping particles on $\ii{1,n}$ 
is $\critFE$-regular if for any $\epsilon',\beta>0$, it holds for large enough $n$ that $\sigma^n$ 
has density at least $\critFE-\epsilon'$ on all intervals of length at least $\beta n$.
(For example, if one forms $\sigma^n$ by placing a sleeping particle at each site independently
with probability $\critFE$, then $(\sigma^n)_{n\geq 1}$ is almost surely $\critFE$-regular.)
Under central driving, the proof of \thref{prop:visit} can be adapted to prove mixing in $o(n)$
time starting from $\sigma^n$. The main change is that one cannot use stationarity to immediately 
deduce that particles leave the interval when stabilizing $\sigma^n+\floor{\epsilon n}\delta_{\ceil{n/2}}$
as is done in \thref{lem:particle.loss}.
Instead, one can let the $\floor{\epsilon n}$ particles spread to an interval.
This interval of active particles then serves as a base for a nucleation argument just
as  $\ii{1,\ceil{n/2}}$ or $\ii{\ceil{n/2},n}$ did in the proof of \thref{prop:visit}.

This argument fails for uniform driving, since the driving particles do not immediately
form a macroscopic interval to start the nucleation.
\begin{problem} \thlabel{q:remix}
    Prove that if $(\sigma^n)_{n \geq 1}$ is a {$\critFE$-regular} sequence of stable
    configurations, then the driven-dissipative ARW chain on $\ii{1,n}$ 
    with uniform driving mixes in $o(n)$ time started from $\sigma^n$.
\end{problem}

\subsection{Slow remixing under left driving}

In this section, we prove that the remixing time at stationarity with left driving
is $\Omega(n)$:
\begin{prop} \thlabel{thm:ce}
  There exist $\epsilon,\delta>0$ depending only on $\lambda$ such that with driving at site~$1$, 
    $$\E_{\pi_n} \norm{ \QuenchedP^{\stationaryconfig}(\sigma_{\epsilon n} \in \cdot \:) - \pi_n}_{\mathrm{TV}} >  \delta  $$
    for all $n$.
\end{prop}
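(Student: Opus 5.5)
The plan is to exploit the fact that particles added at site~$1$ almost never influence the far right end of the interval within $\epsilon n$ steps. We compare the quenched law of $\sigma_{\epsilon n}$ with $\pi_n$ through the single statistic $\sigma_{\epsilon n}(n)\in\{0,\s\}$. For a stable configuration $\eta$, let $A_\eta=\{\eta'\colon \eta'(n)=\eta(n)\}$. Then
\begin{align*}
  \norm{\QuenchedP^{\stationaryconfig}(\sigma_{\epsilon n}\in\cdot\:)-\pi_n}_{\mathrm{TV}}
  \geq \QuenchedP^{\stationaryconfig}\bigl(\sigma_{\epsilon n}(n)=\stationaryconfig(n)\bigr)-\pi_n(A_{\stationaryconfig}).
\end{align*}
Taking $\E_{\pi_n}$, the second term becomes the collision probability $p_n^2+(1-p_n)^2$, where $p_n=\pi_n(\sigma(n)=\s)$. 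So it suffices to show two things: first, that $\P(\text{site } n \text{ is visited during the first } \epsilon n \text{ steps})\leq C\epsilon$; second, that $p_n\in[c,1-c]$ for some $c=c(\lambda)>0$ uniformly in $n$. Then choosing $\epsilon$ small makes the expected total variation at least $1-C\epsilon-(1-2c(1-c))>\delta$.

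For the first point I would use a weighted-mass martingale. Give a particle at $x$ weight $x$, with the left sink at weight $0$ and the right sink at weight $n+1$. Each jump instruction changes the total weight by a mean-zero amount, and sleep instructions do not change it. Let $Z^+$ be the number of particles exiting to the right sink, and write $\sigma_0=\stationaryconfig$. By optional stopping applied to the stabilization of $\tau=\sigma_0+\floor{\epsilon n}\delta_1$, which is legitimate since the stabilization time has exponential tails by the random-walk domination at the end of the proof of \thref{thm:regularity},
\begin{align*}
  \E M_{1,n}(\sigma_{\epsilon n}) = \E M_{1,n}(\sigma_0) + \floor{\epsilon n} - (n+1)\E Z^+.
\end{align*}
Stationarity makes the two expected weighted masses equal, so $\E Z^+\leq\epsilon$. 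Next, if site $n$ is visited, an active particle stands at $n$ at some toppling time. By the strong Markov property for instruction stacks \cite[Proposition~4]{levine2021source}, the next instruction at $n$ is a right jump with probability $\frac{1}{2(1+\lambda)}$, which sends a particle to the right sink. Hence $\P(n\text{ visited})\leq 2(1+\lambda)\E Z^+\leq 2(1+\lambda)\epsilon$. On the complementary event we have $\sigma_{\epsilon n}(n)=\sigma_0(n)$.

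The main obstacle is the second point, namely that $\pi_n(\sigma(n)=\s)$ stays bounded away from $0$ and $1$. I would use the representation $\stationaryconfig=\Stab[\In]\1_{\In}$ and \thref{lem:mass.balance}: site $n$ ends with a sleeping particle exactly when its final inflow and outflow leave one particle there and the last instruction executed is a sleep instruction. For the lower bound on $p_n$, I would show that with probability bounded below the final instruction at $n$ is a sleep instruction. The idea is to resample, via the strong Markov property, the type of the instruction following the last arrival at $n$, and to note that a left-jump-then-sleep pattern at $n-1$ keeps the particle nearby. For the upper bound, I would note that with probability at least $\frac{1}{2(1+\lambda)}$ the first instruction at $n$ is a right jump, and then argue that with probability bounded below no particle later arrives at $n$. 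This requires a positive-probability event for the last few sites that does not depend on $n$, for instance that site $n-1$ ends asleep before anything returns, handled by conditioning on the stacks away from $\{n-1,n\}$. If this local argument proves awkward, a fallback is to replace the single site by a window $\ii{n-K+1,n}$ of fixed size $K$. The visiting bound becomes $C_K\epsilon$, since an active particle anywhere in the window reaches the right sink with probability at least $(2(1+\lambda))^{-K}$. It would then suffice to show that the window's law under $\pi_n$ has maximal atom bounded away from $1$, which is plausibly easier via \thref{thm:regularity}-type density information near the boundary.
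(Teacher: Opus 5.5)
\textbf{The gap.} The step you call the main obstacle is never proved. You do not show that $p_n=\pi_n(\sigma(n)=\s)$ stays in $[c,1-c]$ uniformly in $n$, and the routes you sketch do not work as stated.
\begin{itemize}
\item The time of the last arrival at $n$ is not a stopping time, since it depends on the future of the stabilization. So the strong Markov property for instruction stacks does not let you resample the instruction that follows it.
\item For the upper bound, a right jump as the \emph{first} instruction at $n$ says nothing about particles that later return to $n$. You give no uniform-in-$n$ control of those returns.
\item The window fallback does not rescue this. \thref{thm:regularity} is vacuous for windows of fixed size $K$, because its bound $Cn^6e^{-cK}$ exceeds $1$. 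For windows of size of order $\log n$, your visiting constant $(2(1+\lambda))^{K}$ is polynomial in $n$, so $C_K\epsilon$ is no longer small.
\end{itemize}

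\textbf{The missing idea.} Use the abelian property to pick a good toppling order when sampling $\sigma=\Stab[\In]\1_{\In}$. First weakly stabilize at $n$: topple $n$ only when it holds at least two particles, and continue until $\ii{1,n-1}$ is stable. Since $n$ starts with an active particle and is never toppled while alone, this ends with exactly one active particle at $n$. The next instruction at $n$ is fresh:
\begin{itemize}
\item with probability $\frac{\lambda}{1+\lambda}$ it is a sleep, which ends the whole stabilization with $\sigma(n)=\s$;
\item with probability $\frac{1}{2(1+\lambda)}$ it is a right jump, which ends it with $\sigma(n)=0$.
\end{itemize}
Hence $\frac{\lambda}{1+\lambda}\le p_n\le 1-\frac{1}{2(1+\lambda)}$. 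Your lower bound $2p_n(1-p_n)-2(1+\lambda)\epsilon$ is then positive uniformly in $n$ once $\epsilon$ is small.

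\textbf{The rest is correct, by a different route.} Your reduction to the statistic $\sigma_{\epsilon n}(n)$ and your bound on visiting $n$ both hold. You use the bounded martingale $\sum_x x\abs{\eta(x)}+(n+1)Z^+$ with optional stopping and stationarity to get $\E Z^+=\floor{\epsilon n}/(n+1)$, then use the fresh first instruction at $n$. The paper instead proves that a stationary avalanche has the same expected sitewise odometer as a single random walk. From this it gets that each avalanche from site~$1$ makes on average $G_n(1,n)\le 2/n$ jumps into $n$, and it sums over the $\floor{\epsilon n}$ steps. Your martingale argument is more elementary; the paper's identity is sitewise and useful in its own right.
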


We first state that the odometer of an avalanche at stationarity initiated at $x$
is the same in expectation at each site as the odometer of a simple random walk from $x$ killed at the sink.
Similar ideas are exploited in  \cite{StaufferTaggi18, Taggi19, levine2021exact,junge2025mean}. 

Let $\odom_{\mathsf{Av}, x}$ be the odometer stabilizing the configuration $\config+\delta_x$ where
$\config\sim\pi_n$, i.e., the odometer of an avalanche starting at $x$ at stationarity. 
Let $\odom_{\mathsf{RW}, x}$ denote the odometer given by acceptably toppling
a single particle at $x$ until it leaves $\ii{1,n}$.
Recall that for $i\in\{\Left,\Right\}$, we write $f^i(v)$ to denote the number of instructions of
type $i$ executed by the odometer at site~$v$.

\begin{prop}\thlabel{lem:exp-odom}
    For any sites $x,y \in \In$ and instruction type $i\in\{\Left,\Right\}$, 
    \begin{align}\label{eq:exp-odom}
        \E \big[\odom_{\mathsf{Av}, x}^i(y)\big] = \E \big[\odom_{\mathsf{RW}, x}^i(y)\big].
    \end{align}
\end{prop}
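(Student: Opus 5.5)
We will prove \eqref{eq:exp-odom} by showing that both sides, viewed as functions of $x$ for fixed $y$ and $i$, solve the same discrete Dirichlet problem on $\In$. Write $g(x)=\E[\odom_{\mathsf{Av},x}^i(y)]$ and $h(x)=\E[\odom_{\mathsf{RW},x}^i(y)]$, and extend both by $0$ at the sink sites $0$ and $n+1$. For the random walk, $h$ is the expected number of $i$-steps taken from $y$ by a simple random walk started at $x$ and killed on exiting $\In$. By the first-step decomposition it satisfies
\begin{align*}
  h(x) = \tfrac12 \1\{x=y\} + \tfrac12\bigl(h(x-1)+h(x+1)\bigr), \qquad x\in\In .
\end{align*}
The $\tfrac12\1\{x=y\}$ term is the chance that the first step from $y$ is of type $i$; sleep instructions are irrelevant for a lone particle toppled acceptably. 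This system has a unique solution with zero boundary values, because the killed walk's Green's function is finite. So it suffices to show that $g$ satisfies the same equation.

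To get the equation for $g$, we run the stabilization of $\sigma+\delta_x$ with $\sigma\sim\pi_n$ by first toppling only the added particle's site until a jump instruction is executed. This uses the abelian property and the strong Markov property for stacks \cite[Proposition~4]{levine2021source}. The key observation concerns a sleep instruction at $x$ that is executed while the sleeping particle of $\sigma$ (if any) has been woken. If $\sigma(x)=0$ it puts the added particle to sleep, and if $\sigma(x)=\s$ there are two particles, so it does nothing.

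The case $\sigma(x)=0$ needs care. Falling asleep at $x$ yields the stable configuration $\sigma+\s\delta_x$, which is one step of the driven-dissipative chain with no activity elsewhere, and so does not contribute to the odometer at $y$. We therefore argue through stationarity. By stationarity of $\pi_n$ under driving at $x$ \cite[Lemma~2.4]{levine2021exact}, $\Stab(\sigma+\delta_x)\sim\pi_n$. We then decompose according to the first jump instruction at $x$ on the stack, which has probability $\tfrac12$ each of being left or right. The tempting cleaner route is a mass-transport identity: since the configuration is stationary, the expected net flux across each edge must match that of a single random walk. Concretely, \thref{lem:mass.balance} applied to $\odom_{\mathsf{Av},x}$ gives, after taking expectations and using $\E\abs{\Stab(\sigma+\delta_x)(v)}=\E\abs{\sigma(v)}$ by stationarity,
\begin{align*}
  \1\{v=x\} + \E\rt{g_x}{v-1} + \E\lt{g_x}{v+1} - \E\lt{g_x}{v} - \E\rt{g_x}{v} = 0
\end{align*}
for all $v\in\In$, where $g_x=\odom_{\mathsf{Av},x}$ and the odometer vanishes at the sinks. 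The random walk odometer satisfies the same identity exactly.

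This gives $n$ equations in $2n$ unknowns, so we also need the symmetry $\E\lt{g_x}{v}=\E\rt{g_x}{v}$. This follows because the instructions at each site are i.i.d.\ left/right jumps interspersed with sleeps. The number of left jumps among the first $f(v)$ instructions, minus half the number of jump instructions, is a martingale-type quantity. Since $f(v)$ is a stopping time for the stack exploration, its expectation vanishes by optional stopping, given integrability of $f(v)$, which follows from the $n^3$ domination argument at the end of the proof of \thref{thm:regularity}. With symmetry, writing $J(v)=\E\lt{g_x}{v}$, the balance equation reduces to $\1\{v=x\}+J(v-1)+J(v+1)-2J(v)=0$ with $J(0)=J(n+1)=0$. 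This is exactly the Green's function equation satisfied by the walk's $i$-counts, and uniqueness gives \eqref{eq:exp-odom}.

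The main obstacle is the optional-stopping step. The odometer value $f(v)$ depends on the instructions at $v$ only through their use, so it is a stopping time for a suitable filtration revealing stacks in toppling order. Establishing this rigorously requires fixing a toppling order, via the abelian property, and the strong Markov property of \cite[Proposition~4]{levine2021source}. Once this is done, the first-step analysis is unnecessary and the proof reduces to mass balance plus stationarity.
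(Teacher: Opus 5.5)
Your final argument, the mass-balance route you settle on at the end, is correct, but it is a genuinely different route from the paper's. The paper solves no equation. It stabilizes $\1_{\In}+\delta_x$ in two orders: either walk the extra particle out and then stabilize $\1_{\In}$ on fresh stacks, or first stabilize $\1_{\In}$ (which produces $\sigma\sim\pi_n$ by Levine--Liang) and then run the avalanche. So the common odometer splits as $f_1+f_2=f_1'+f_2'$, with $f_1\eqd\odom_{\mathsf{RW},x}$, $f_2'\eqd\odom_{\mathsf{Av},x}$, and $f_2\eqd f_1'$ by the strong Markov property for stacks. Taking expectations of $i$-counts and cancelling $\E{f_1'}^i(y)=\E f_2^i(y)$ gives the identity for each instruction type at once, with no left/right symmetry and no uniqueness argument.

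You instead use only that one avalanche preserves the mean occupation of every site. Through mass balance this gives $n$ flux equations, which you close with the optional-stopping identity $\E\,\lt{f}{v}=\E\,\rt{f}{v}$, and you identify the unique solution of the resulting Dirichlet problem as $\tfrac12 G_n(x,\cdot)$. Your route needs a weaker input: invariance of one-site mean occupations rather than the representation $\pi_n=\mathrm{law}(\Stab\1_{\In})$. It also outputs directly the Green's function formula that the subsequent corollary uses. The price is the martingale and integrability step, which the paper's coupling avoids.

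Two small points. First, the first-step equation in the starting point and the discussion of the case $\sigma(x)=0$ are never used and should be cut. The equation you actually solve is in the site variable $v$ with source at $x$, and the random-walk side satisfies it by the same pathwise mass balance (final configuration empty) plus the same symmetry.

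Second, the stopping-time property is easier than you suggest, and there is no need to fix a toppling order. Let $f$ be the avalanche odometer. If $f(v)\le k$, a legal stabilizing sequence uses only the first $k$ instructions at $v$, and it remains legal and stabilizing when later instructions at $v$ are altered. So $\{f(v)\le k\}$ is determined by $\sigma$, the stacks off $v$, and the first $k$ instructions at $v$, while the $(k+1)$st instruction at $v$ is independent of all of these. With $\E f(v)<\infty$ from the random-walk domination, optional stopping for the bounded-increment martingale applies.
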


\begin{proof}
    Consider the odometer $\odom$ stabilizing the configuration $\onepersite_{\In} + \delta_x$. As in \cite{levine2021exact}, we consider two ways of arriving at $\odom$. 
    Using procedure~A, we have $f=f_1+f_2$ where $f_1$ is the odometer given by    
    toppling the extra particle at $x$ until it exits the graph, and $f_2$ is the odometer
    counting the topplings from that point on to stabilize the remaining particles. 
    Alternatively, using procedure~B, we have
    $f = f'_1+f'_2$ where $f'_1$ is the odometer stabilizing $\onepersite_{\In}$
    and $f'_2$ is the odometer stabilizing the resulting configuration $\sigma+\delta_x$
    with $\sigma\sim\pi_n$.
    We claim that
    \begin{align}
      f_1&\eqd \odom_{\mathsf{RW}, x},\label{eq:f1}\\
      f'_2&\eqd \odom_{\mathsf{Av}, x},\label{eq:f'2}\\\intertext{and}
      f_2&\eqd f'_1.\label{eq:f2}
    \end{align}
    Indeed, \eqref{eq:f1} is evident. Equation~\eqref{eq:f'2} holds because after executing
    odometer $f_1'$ in procedure~B, we are left with a configuration $\sigma\sim\pi_n$ 
    \cite[Theorem~2.1]{levine2021exact},
    plus an additional particle at $x$. Equation~\eqref{eq:f2} holds because $f_2$ and $f'_1$
    are both odometers stabilizing $\onepersite_{\In}$. The odometer $f'_1$ carries this out
    with the original instruction stacks in procedure~B, while $f_2$ carries this
    out using different but identically distributed instructions by the strong Markov
    property for instruction stacks \cite[Proposition~4]{levine2021source}.
    We note that $f_1$ and $f_2$ are independent while $f'_1$ and $f'_2$ are not, though
    it is irrelevant for this argument.
    
    Now, we have $f_1^i(y)+f_2^i(y)={f_1'}^i(y) + {f_2'}^i(y)$, and we take expectations,
    apply \eqref{eq:f1}--\eqref{eq:f2}, and subtract off $\E {f'_1}^i(y) = \E f_2^i(y)$
     to obtain \eqref{eq:exp-odom}.
\end{proof}

We get the following corollary for controlling which sites are visited by an avalanche. Let $G_n(x,y)$ for $x,y \in \In$ denote the Green's function, specifying the expected number of visits to $y$ before leaving $\In$ of a random walk started from $x$.
\begin{cor}\thlabel{cor:exp-visits}
The expected number of times a particle jumps to $y \in \In$ during a stationary avalanche from $x\neq y$ is given by $G_n(x,y)$. 
\end{cor}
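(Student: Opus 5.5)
The plan is to write the number of jumps into $y$ as a sum of two instruction counts at neighbors of $y$, apply \thref{lem:exp-odom} to each count, and then identify the corresponding random-walk quantity with the Green's function.

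\emph{Step 1: decompose jumps into $y$.} Fix $x\neq y$ in $\In$. For any odometer $f$ supported on $\In$, the number of jumps into $y$ executed by $f$ is
\begin{align*}
  \rt{f}{y-1} + \lt{f}{y+1}.
\end{align*}
This holds because a particle arrives at $y$ only by a right instruction executed at $y-1$ or a left instruction executed at $y+1$. If $y-1$ or $y+1$ lies outside $\In$ (it is then the sink), the odometer vanishes there, so the corresponding term is $0$ and the formula still holds.

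\emph{Step 2: apply \thref{lem:exp-odom}.} I apply the proposition twice: with $i=\Right$ at site $y-1$, and with $i=\Left$ at site $y+1$. When $y\pm1\notin\In$, both sides are trivially $0$. Adding the two identities and using linearity of expectation gives
\begin{align*}
  \E\bigl[\rt{\odom_{\mathsf{Av},x}}{y-1}+\lt{\odom_{\mathsf{Av},x}}{y+1}\bigr]
  = \E\bigl[\rt{\odom_{\mathsf{RW},x}}{y-1}+\lt{\odom_{\mathsf{RW},x}}{y+1}\bigr].
\end{align*}
By Step 1, the left-hand side is the expected number of jumps into $y$ during a stationary avalanche from $x$.

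\emph{Step 3: identify the right-hand side with $G_n(x,y)$.} Under $\odom_{\mathsf{RW},x}$, a single particle is acceptably toppled until it exits. Sleep instructions executed on a lone particle put it to sleep, but the next acceptable toppling executes the next instruction there. So the sleep instructions only pause the particle, and its sequence of positions is a simple random walk from $x$ killed on leaving $\In$. Consequently the right-hand side is the expected number of jumps of this walk into $y$. Since $x\neq y$, every visit to $y$ is preceded by a jump into $y$; there is no time-$0$ visit to account for. Hence the expected number of jumps into $y$ equals the expected number of visits to $y$, which is $G_n(x,y)$.

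\emph{Main obstacle.} The only delicate points are bookkeeping. First, the boundary cases $y\in\{1,n\}$, where one neighbor of $y$ is the sink. Second, checking that the hypothesis $x\neq y$ is exactly what removes the initial visit from the count. Neither requires new ideas; the substance of the argument is contained in \thref{lem:exp-odom}.
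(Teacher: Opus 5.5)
Your proposal is correct and is essentially the paper's proof. The paper likewise obtains the corollary by applying \thref{lem:exp-odom} to the right instructions at $y-1$ and the left instructions at $y+1$, and summing. Your extra bookkeeping (the sink terms when $y\in\{1,n\}$, the role of $x\neq y$ in removing the time-$0$ visit, and identifying the acceptably toppled lone particle with a killed simple random walk) spells out details the paper leaves implicit.
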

\begin{proof}
    This follows from \thref{lem:exp-odom} by summing over all neighbors $z$ of $y$ and instruction types sending a particle from $z$ to $y$.
\end{proof}

To prove \thref{thm:ce}, we will apply \thref{cor:exp-visits} to show that in $\floor{\epsilon n}$
steps of driving from the left starting from $\sigma\sim\pi_n$, it is unlikely that site~$n$ is visited.
Thus $\sigma_{\epsilon n}(n)$ is nearly deterministic given $\sigma$. Therefore the distribution of
$\sigma_{\epsilon n}$ given $\sigma$ cannot be close to $\pi_n$
so long as $\pi_n$ is not deterministic at site~$n$,
a fact we confirm now.

\begin{lemma}\thlabel{lem:endpoint-bounds}
    For all $n \ge 1$,
    $$
        \frac{\lambda}{1 + \lambda} \le \P_{\pi_n}\big(\config(n) = \s\big) \le 1 - \frac{1/2}{1 + \lambda}.
    $$
\end{lemma}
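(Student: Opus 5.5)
The plan is to use the fact, from \cite[Lemma~2.4]{levine2021exact}, that $\pi_n$ does not depend on the driving sequence. We therefore drive at site~$n$. If $\sigma\sim\pi_n$, then $\sigma_1:=\Stab(\sigma+\delta_n)\sim\pi_n$, and it suffices to bound $\P(\sigma_1(n)=\s)$. By the abelian property we may choose the toppling order freely. We will always topple site~$n$ when it holds an active particle, and otherwise topple some other active site of $\ii{1,n-1}$.

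Under this order the activity at $n$ splits into \emph{episodes}. An episode begins when $n$ becomes active and lasts until $n$ is empty or holds one sleeping particle. Because site $n$ is always toppled first, each episode eventually reaches an instant where $n$ holds exactly one active particle. The next instruction at $n$ is then, independently of everything before by the strong Markov property for instruction stacks \cite[Proposition~4]{levine2021source}, one of three things: sleep with probability $\lambda/(1+\lambda)$, jump to the sink with probability $\tfrac12/(1+\lambda)$, or jump to $n-1$ with probability $\tfrac12/(1+\lambda)$. (A jump from $n$ while it holds several particles leaves it still occupied, so only these lone-particle instructions can end an episode.) The value of $\sigma_1(n)$ is decided by the outcome of the \emph{last} episode: a sleep gives $\s$, and either jump gives $0$. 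Since the first episode, started by the driving particle, certainly occurs, the whole difficulty is to handle the fact that ``last'' is not a stopping time.

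For the upper bound we use a switching comparison between ``sleep'' and ``jump to the sink''. In both cases the configuration on $\ii{1,n-1}$ is identical at the end of the episode. The future on $\ii{1,n-1}$ up to the next arrival at $n$ does not depend on the state of $n$, so whether a further episode ever occurs has the same conditional probability in both branches. Hence, episode by episode,
\begin{align*}
  \P(\text{episode $k$ is last and ends in sink jump})=\frac{1/2}{\lambda}\,\P(\text{episode $k$ is last and ends asleep}).
\end{align*}
Summing over $k$ gives $\P(\sigma_1(n)=\s)\bigl(1+\tfrac{1}{2\lambda}\bigr)\le 1$, that is, $\P(\sigma_1(n)=\s)\le 2\lambda/(1+2\lambda)$. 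This is at most the claimed bound $1-\tfrac{1/2}{1+\lambda}$. If the switching turns out to need the weaker form in which only the first episode is compared, the claimed bound is exactly what that weaker form delivers: already in the first lone-particle step, $n$ ends empty with probability at least $\tfrac12/(1+\lambda)$, since a jump to the sink leaves $\ii{1,n-1}$ with no incoming particle from $n$.

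For the lower bound we need the last episode to end asleep with probability at least $\lambda/(1+\lambda)$. Here the branch ``jump to $n-1$'' breaks the clean switching, because it changes the configuration on $\ii{1,n-1}$. My first attempt will be a different stopping-time decomposition. Look at the last lone-particle instant at $n$ at which a fresh instruction is drawn, and argue that conditional on reaching a lone-particle instant with no further arrivals to come, the outcome is a fresh instruction; if it is a sleep, the result is $\s$. The cleanest way to make this rigorous, which I would try first, is to reverse the roles in the switching. Map each run whose last episode ends by a jump (to the sink or to $n-1$) to the run in which that same instruction is replaced by sleep, and show that the modified run has no subsequent episode: with a jump to $n-1$ the extra particle is simply absent, and by monotonicity (the least-action principle) fewer particles cannot create a new arrival at $n$. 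This injection gives $\P(\s)\ge\frac{\lambda}{1}\cdot\P(0)$, that is, $\P(\s)\ge \lambda/(1+\lambda)$. The main obstacle is this monotonicity step: removing a particle at $n-1$ must not produce a new visit to $n$, and that needs an abelian/least-action comparison of odometers on $\ii{1,n-1}$.
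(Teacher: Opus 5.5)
Your argument is correct, but it takes a longer route than the paper's. The paper samples $\pi_n$ as $\Stab[\ii{1,n}]\onepersite_{\ii{1,n}}$ and uses the opposite toppling priority from yours. It first weakly stabilizes at $n$: it topples $n$ only while $n$ holds at least two particles, and continues until $\ii{1,n-1}$ is stable and $n$ holds a single active particle. The next, fresh instruction at $n$ then ends the entire stabilization if it is a sleep (giving $\s$, probability $\lambda/(1+\lambda)$) or a jump to the sink (giving $0$, probability $\tfrac12/(1+\lambda)$). This yields both bounds at once, with no need to identify a last episode.

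The same trick works verbatim for your $\sigma+\delta_n$, because $\sigma$ is already stable on $\ii{1,n-1}$. The ``last is not a stopping time'' difficulty comes from your choice to topple $n$ first. Under that order, a particle sent to $n-1$ while $n$ held two particles can come back later.

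Your episode switching is nevertheless sound. Condition on the history up to the lone-particle instant of episode $k$. The event that no later particle reaches $n$ is then the event that stabilizing the current configuration on $\ii{1,n-1}$, with $n$ acting as a sink, makes no right jump from $n-1$. This event is independent of the fresh instruction at $n$. The sleep/sink comparison therefore gives the sharper upper bound $2\lambda/(1+2\lambda)$.

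For the lower bound, the monotonicity you flag as the main obstacle is standard. A legal toppling sequence for $\eta$ remains legal for any $\eta'\ge\eta$ in the order $0<\s<1<\cdots$, so stabilizing odometers are monotone in the configuration. Hence removing the particle at $n-1$ cannot create a new arrival at $n$, and you get $\P(0)\le\P(\s)/\lambda$.

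So your approach buys a slightly sharper upper bound, while the paper's toppling order buys a two-line proof with no switching or monotonicity. One caution: your fallback claim, that the first lone-particle step already leaves $n$ empty with probability at least $\tfrac12/(1+\lambda)$, is false under your toppling order when $\sigma(n)=\s$, for the reason above. Fortunately you do not need it.
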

\begin{proof}
  We sample from $\pi_n$ by stabilizing $\onepersite_{\In}$.
  Carry out the stabilization by first weakly stabilizing at $n$, i.e.,
  toppling until the configuration has one active particle at $n$ and is stable on $\ii{1,n-1}$.
  Then topple the particle at $n$ once. 
  With probability $\frac{\lambda}{1 + \lambda}$ it immediately falls asleep, ending the stabilization with $\config(n) = \s$ and thus proving the lower bound (this is the same argument used in \cite{StaufferTaggi18} to show that $\critFE(d) \ge \smash{\frac{\lambda}{1 + \lambda}}$ in any dimension). On the other hand, with probability $\smash{\frac{1/2}{1 + \lambda}}$ it will jump to the right sink $n + 1$, ending the stabilization with $\config(n) = 0$, which proves the upper bound.
\end{proof}

\begin{proof}[Proof of \thref{thm:ce}] 
Let $q^* := \min(\frac{\lambda}{1 + \lambda}, \frac{1/2}{1 + \lambda})$, so that by \thref{lem:endpoint-bounds} we have
\begin{align}\label{eq:pibound}
    \min\bigl(\P_{\pi_n}(\config(n) = 0),\, \P_{\pi_n}(\config(n) = \s)\bigr) \ge q^* > 0.
\end{align}
By definition of the total variation norm, 
for a deterministic stable starting configuration $\config_0$ on $\In$ we have
\begin{align*} 
        \left\| \QuenchedP^{\config_0}(\config_{\epsilon n} \in \cdot \:) - \pi_n \right\|_{\mathrm{TV}} &\ge 
        \P_{\pi_n}\bigl(\config(n) \neq \config_0(n)\bigr)-\QuenchedP^{\config_0}\big(\config_{\epsilon n}(n) \neq \config_0(n)\big)\\
        &\geq q^* - \QuenchedP^{\config_0}\big(\config_{\epsilon n}(n) \neq \config_0(n)\big),
    \end{align*}
    applying \eqref{eq:pibound} in the second line.
    Replacing $\sigma_0$ with $\sigma\sim\pi_n$ and taking expectations gives
    \begin{align}\label{eq:tv-dist}
        \E_{\pi_n}\left\| \QuenchedP^\config(\config_{\epsilon n} \in \cdot \:) - \pi_n \right\|_{\mathrm{TV}} 
            &\ge q^* - \E_{\pi_n}\QuenchedP^\config\big(\config_{\epsilon n}(n) \neq \config(n)\big) \nonumber \\
            &\ge q^* - \sum_{k = 1}^{\floor{\epsilon n}}\E_{\pi_n}\QuenchedP^\config\big(\config_k(n) \neq \config_{k-1}(n)\big) \nonumber \\
            &= q^* - \floor{\epsilon n}\,\E_{\pi_n}\QuenchedP^\config\big(\config_1(n) \neq \config(n)\big) \nonumber \\
            &\ge q^* - \epsilon n\,\E_{\pi_n}\QuenchedP^\config\big(\config_1(n) \neq \config(n)\big).
    \end{align}
    The equality in the second-to-last line is due to stationarity. 
    
    Now $\config_1$ is the result of a stationary avalanche from the left endpoint. By \thref{cor:exp-visits} and Markov's inequality, this avalanche visits $n$ with probability at most $G_n(1,n)$; if the avalanche
    does not visit $n$, then $\config(n)=\config_1(n)$. Thus
    \begin{align*}
      \E_{\pi_n}\left\| \QuenchedP^\config(\config_{\epsilon n} \in \cdot \:) - \pi_n \right\|_{\mathrm{TV}}
        &\geq q^* - \epsilon n G_n(1,n).
    \end{align*}
     And $G_n(1,n) \le 2/n$, since by gambler's ruin the random walk reaches $n$ with exactly $1/n$ probability, and every visit has at least a $1/2$ probability of being the last since the particle may jump right. Thus, choosing any $\epsilon < q^* / 2$, we have by \eqref{eq:tv-dist} that
    \begin{align*}
        \E_{\pi_n}\left\| \QuenchedP^\config(\config_{\epsilon n} \in \cdot \:) - \pi_n \right\|_{\mathrm{TV}} 
            &\ge q^* - \epsilon n (2/n) = q^* - 2\epsilon > 0.\qedhere
    \end{align*}
\end{proof}

\section*{Artificial intelligence disclosure}
All the mathematical content and writing in this paper was carried out entirely by us, though
we made edits to the paper based on a thorough proofreading by Claude Opus~5 and 5.5.

\section*{Acknowledgments}

T.J.\ was partially supported by NSF grant DMS-2503779.
M.J.\ was partially supported by NSF grant DMS-2238272.

\bibliographystyle{amsalpha}
\bibliography{main}

\end{document}